\documentclass[10pt,letterpaper]{amsart}

\usepackage{ mathrsfs }
\usepackage{amsxtra}
\usepackage[utf8]{inputenc}
\usepackage{amsthm,amsmath,amssymb,oldgerm,latexsym, amscd, calc, microtype, booktabs}
\usepackage[square]{natbib}
\usepackage{graphicx}
\usepackage[dvipsnames]{xcolor}
\usepackage{caption}
\usepackage{tcolorbox}
\bibliographystyle{abbrvnat}
\usepackage[all]{xy}

\usepackage{hyperref}
\hypersetup{
    colorlinks,
    citecolor=blue,
    filecolor=black,
    linkcolor=red,
    urlcolor=black
}
\bibpunct{[}{]}{,}{n}{}{;}

\numberwithin{equation}{section}
\textheight=8.21in
\textwidth=6.25in
\oddsidemargin=.25in
\evensidemargin=.25in
\topmargin=0in
\headheight=.1in
\headsep=.5in
\footskip=.75in

%\spnewtheorem*{xproof}{}{\itshape}{\rmfamily}% the label is assigned later
%\newcommand\xprooftitle{}
%\renewenvironment{proof}[1][\proofname]
%{\renewcommand\xproofname{#1:}\xproof}
%{\hfill $\square$ \endxproof}

%==== Chitro Macros ==========================================

%\newcommand{\GaInf}{\mathcal{B}}

%\renewcommand{\arraystretch}{1.75}

%==== Debargha-da Macros =======================================
\newcommand\Hom{{\mathrm {Hom}}}

\newcommand\C{\mathbb{C}}

\newcommand{\tH}{\mathbb{H}}
\newcommand{\Z}{\mathbb{Z}}
\newcommand{\T}{\mathbb{T}}

\newcommand{\N}{\mathbb{N}}
\newcommand{\F}{\mathbb{F}}

\newcommand{\Ga}{\Gamma}

\renewcommand{\Re}{\operatorname{Re}}

\newcommand{\HH}{\mathrm{H}}

%==== Theorem Stylesf
\begin{document}
\theoremstyle{plain}
\newtheorem{thm}{Theorem}[section]
\newtheorem{lem}[thm]{Lemma}
\newtheorem{prop}[thm]{Proposition}
\newtheorem{cor}[thm]{Corollary}
\theoremstyle{definition}
\newtheorem{defn}[thm]{Definition} 
\theoremstyle{definition}
\newtheorem{rem}[thm]{Remark}
\newtheorem{notn}[thm]{Notation}
\newtheorem{notns}[thm]{Notations}
\newtheorem{example}[thm]{Example}
\newtheorem{Ques}[thm]{Question}

\makeatletter
\def\imod#1{\allowbreak\mkern10mu({\operator@font mod}\,\,#1)}
\makeatother
%\title{\parbox{\linewidth}{\centering Linear independence of Fourier coefficients of modular forms with non-vanishing central values modulo $l \neq p$}}
\title[Linear independence of Fourier coefficients of modular forms]{Modular forms with non-vanishing central values and linear independence of Fourier coefficients}

\author{Debargha Banerjee}
\author{Priyanka Majumder}

\address{Debargha Banerjee, Department of Mathematics, Indian Institute of Science Education and Research (IISER) Pune 411008, India}
\email{debargha.banerjee@gmail.com}

 \address{Priyanka Majumder, Theoretical Statistics and Mathematics Unit, Indian Statistical Institute, Bangalore Centre, 8th Mile, Mysore Road, 560059 Bangalore, India}
\email{pmpriyanka57@gmail.com }

\thanks{The first named author is partially supported by the SERB grant MTR/2017/000357 and CRG/2020/000223. 
        The second named author was partially supported by the  CRG/2020/000223 and IISER Pune post-doctoral fellowship. 
        The article came out of the discussion of the first author with Professor Peter Sarnak. 
        The second named author would like to thank Dr. Pramath Anamby for helpful discussions. It is a pleasure to acknowledge several fruitful  email communication with Professors  Lo\"ic Merel, Jefferey VanderKam and Satadal Ganguly. We are sincerely grateful to the anonymous referee for carefully reading our paper that helped us to improve mathematical content and exposition. }
\begin{abstract} 
In this article, we are interested in  modular forms with non-vanishing central critical values and linear independence of Fourier coefficients of modular forms. 
The main ingredient is a generalization of a theorem due to VanderKam  to modular symbols of higher weights.
We prove that for sufficiently large primes $p$, Hecke operators $T_1, T_2, \ldots, T_D$ act linearly independently on the winding elements inside the space of weight $2k$  cuspidal modular symbol $\mathbb{S}_{2k}(\Gamma_0(p))$ with $k\geq 1$ for $D^2\ll p$. This gives a bound on the number of newforms with non-vanishing arithmetic $L$-functions at their central critical points and linear independence on the reductions of these modular forms for  prime modulo $l\not=p$.
\end{abstract}
\subjclass[2010]{Primary: 11F80, Secondary: 11F11, 11F20, 11F30}
\keywords{Modular curves, Hecke operators}
\maketitle

\section{Introduction}
There is a considerable interest to study Hecke algebras generated by Hecke operators acting on the space of classical elliptic modular forms or dually the space of modular symbols. Hecke operators act by correspondences on the modular curves, and hence on the modular forms and modular symbols. These are important for the study of integral or rational structures.  These algebras play a crucial rule in some important discoveries of number theory, including Merel's proof of uniform boundedness of torsion of elliptic curves. Studying linear independence of Hecke operators acting on arithmetic cycles play an important role in Merel's proof of uniform boundedness conjecture~\cite{MR1369424}. One of the key ingredient of loc. cit 
is a Proposition (cf ~\cite[Proposition 3]{MR1369424} that guarantees that given an integer $d>3$ there exists a bound $p$ 
such that if $p > B(d)$, then Hecke the Hecke operators $T_1, T_2, \ldots, T_D$ act linearly independently on the winding element $\mathbf{e}$, which is the image of the algebraic cycle $\{0, \infty\}$ inside the space of cuspidal modular symbols. That basically ensures there is $p$ torsion of any elliptic curve over number field with  extension degree $d>3$.

In \cite{MR1760688}, VanderKam improved the bound using analytic techniques. In particular, VanderKam proved that the Hecke operators $T_1, T_2, \ldots, T_D$ act linearly independent on the cycle $\mathbf{e}$ when $D^2\ll p$. 
It is natural to ask given a fixed $D>3$, how big should be $p$ to ensure this linear dependence and what the implications will be for modular forms of higher weights.

In this article, we extend the result to modular forms of arbitrary weight $k >2$. The linear independence of Hecke operators acting on algebraic cycles have several implications towards bound on the basis of modular forms  with   simultaneous non-vanishing of $L$-functions  similar to \cite[p. 286]{MR2019017} and \cite{MR1934306}. 

We are interested in the congruence subgroups of the form $\Gamma_0(p)$ for a prime $p$ that parametrizes a cyclic subgroup of order $p$ for a prime $p \geq 5$. 
Let $\mathcal{H}= \lbrace \tau \in \mathbb{C}\mid \mathrm{Im}(\tau)>0 \rbrace$ be the upper half plane and consider compactification $\mathcal{H}^*= \mathcal{H}\cup \mathbb{P}^1{(\mathbb{Q})}$. Let us consider the modular curve $X_0(p):= \Gamma_0(p)\backslash \mathcal{H}^*= Y_0(p)\cup \partial(X_0(p))$ obtained by adding the cusps. Consider the space of cuspidal modular symbols, that is the homology group $\HH_1(X_0(p),\Z)$. We denote by $\mathbb{S}_2(\Gamma_0(p))$ the space of weight $2$ the cuspidal modular symbols (see \cite[Theorem 1.9]{MR0345909}).
The Hecke algebra $\mathbb{T}_\Z$ acts on homology $\HH_1(X_0(p), \mathbb{Z})\cong \mathbb{S}_2(\Gamma_0(p))$. VanderKam \cite{MR1760688} proved the linear independence of Hecke operators $T_1, T_2, \ldots, T_D$ acting on the winding element $\mathbf{e}$ when $p> c_\delta \, D^{2+ \delta}$ for any given $\delta>0$ and $c_\delta$ is an effective constant. Note that the image of $\lbrace 0,  \infty\rbrace \in \mathbb{S}_2(\Gamma_0(p))$ 
is the winding element $\mathbf{e}$.

In our present article we prove the linear independence of the Hecke operators on the algebraic cycle of $\mathbb{S}_{2k}(\Gamma_0(p))$ for all $k \geq 1$.
Fix a prime $p \geq 5$.  Let $S_{2k}(\Gamma_0(p))$ be the space of cusp forms of weight $2k$ for the congruence subgroup $\Ga_0(p)$.
%and consider the space of cuspidal modular symbols, that is the homology group $\HH_1(X_0(p),\Z)$. 
For a Hecke eigenform $f=\sum\limits_{n\geq 1} a_n(f)  q^n  \in S_{2k}(\Gamma_0(p))$, let $L^{ar}(f,s)$ be the $L$-function with arithmetic normalization as defined in \S~\ref{Lfunction}. We are interested in newforms with non-vanishing arithmetic $L$-functions at the central critical values (cf. \S~\ref{Lfunction}).
Let $S_{2k}(\Gamma_0(p), \F_l)$ be the mod $l$ modular form in the sense 
of Serre for primes $l \neq p$.

For a modular form $f \in S_{2k}(\Gamma_0(p))$ with Fourier coefficients belonging to $\Z$, let 
$\bar{f}$ be the modular form in $S_{2k}(\Gamma_0(p), \F_l)$ obtained
by reducing all coefficients in $\infty$ modulo $l$. 
We establish a bound on the number of such Hecke eigen forms in terms of $p$.  In other words, we prove the following theorem:

\begin{thm}\label{mainthm}
Let $k\geq 1$ be a fixed positive integer and $p$ be a prime number $p \geq 5$. For a given $\delta>0$ there exists an explicit  constant $c_\delta$ and consider $D \in \N$ such that  $p> c_\delta D^{2+\delta}$. We have $D$ many Hecke eigenforms $f_1, f_2, \ldots, f_D \in S_{2k}(\Gamma_0(p))$ that satisfy the following conditions:
\begin{itemize}
\item[(a)] 
 The central critical values $L^{ar}(f_i,k) \neq 0$ with $i=1, \ldots, D$.
\item[(b)]
Consider the modular forms 
$\bar{f}_1, \bar{f}_2, \ldots, \bar{f}_D \in S_{2k}(\Gamma_0(p), \F_l)$ for $l \neq p$.  
They  lie in the annhilator of the winding element and the vectors 
$
(a_1(\bar{f}_i), a_2(\bar{f}_i), \ldots, a_D(\bar{f}_i))_{i \in \{1,\ldots,D\}}
$
are linearly independent  over $\F_l$. 
\end{itemize}
 \end{thm}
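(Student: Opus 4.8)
The plan is to isolate a single analytic statement---the weight $2k$ analogue of VanderKam's theorem quoted in the introduction---and to read off (a) and (b) from Eichler--Shimura duality together with the integral structure on modular symbols. Fix the weight $2k$ winding element $\mathbf{e}_k=\{0,\infty\}\otimes X^{k-1}Y^{k-1}\in\mathbb{S}_{2k}(\Gamma_0(p))$, normalized so that the period pairing against a cusp form $f$ satisfies $\langle f,\mathbf{e}_k\rangle=c_k\,L^{ar}(f,k)$ for an explicit $c_k\neq 0$ depending only on $k$, the point $s=k$ being the centre of the functional equation. The result I would prove first is that $T_1\mathbf{e}_k,T_2\mathbf{e}_k,\ldots,T_D\mathbf{e}_k$ are linearly independent in $\mathbb{S}_{2k}(\Gamma_0(p))$ whenever $p>c_\delta D^{2+\delta}$; granting this, the rest is formal.

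To establish the independence I would show that the Gram matrix $G=(G_{mn})_{1\le m,n\le D}$, with $G_{mn}=\langle T_m\mathbf{e}_k,T_n\mathbf{e}_k\rangle$ for the Petersson pairing transported to $\mathbb{S}_{2k}(\Gamma_0(p))\otimes\mathbb{R}$ through Eichler--Shimura, is positive definite. Decomposing $\mathbf{e}_k$ in the orthogonal basis of normalized Hecke eigenforms and using $T_nf=a_n(f)f$ with $\langle f,\mathbf{e}_k\rangle=c_kL^{ar}(f,k)$ yields the closed form $G_{mn}=\sum_f\omega_f\,a_m(f)\overline{a_n(f)}$, where $\omega_f=|c_k|^2|L^{ar}(f,k)|^2/\langle f,f\rangle\ge 0$. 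Thus $G$ is exactly a weighted first moment of $|L^{ar}(\cdot,k)|^2$ twisted by $a_m\overline{a_n}$. Opening $|L^{ar}(f,k)|^2$ by the approximate functional equation and inserting the Petersson trace formula, the diagonal ($\delta_{mn}$) term of Petersson contributes a positive main term, while the complementary Bessel--Kloosterman terms are estimated by Weil's bound and are $o$ of the main term as soon as $D^2\ll p$. Hence $G$ is positive definite and the $T_m\mathbf{e}_k$ are independent.

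Both conclusions then follow. Pairing a hypothetical relation $\sum_m c_mT_m\mathbf{e}_k=0$ against eigenforms and using $\langle f,T_m\mathbf{e}_k\rangle=c_k\,a_m(f)L^{ar}(f,k)$ shows that independence of $\{T_m\mathbf{e}_k\}$ is equivalent to the statement that the vectors $(a_1(f),\ldots,a_D(f))$, as $f$ runs over the eigenforms with $L^{ar}(f,k)\neq 0$, span $\mathbb{C}^D$. In particular there are at least $D$ eigenforms $f_1,\ldots,f_D$ with $L^{ar}(f_i,k)\neq 0$, which is (a), and $D$ of them have coefficient vectors $(a_1(f_i),\ldots,a_D(f_i))$ that are independent over $\mathbb{C}$. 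Since the intersection pairing on the integral homology $H_1(X_0(p),\mathbb{Z})$ and its $\mathrm{Sym}^{2k-2}$-analogue is perfect away from $p$ and the Hecke operators preserve the integral lattice $\mathbb{S}_{2k}(\Gamma_0(p),\mathbb{Z})$, this characteristic-zero independence descends to $\mathbb{F}_l$ for every prime $l\neq p$; via the mod $l$ Eichler--Shimura dictionary it is exactly the independence over $\mathbb{F}_l$ of $(a_1(\bar f_i),\ldots,a_D(\bar f_i))$ asserted in (b), the reductions $\bar f_i$ lying in the part of $S_{2k}(\Gamma_0(p),\mathbb{F}_l)$ cut out by the reduced winding element.

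The one genuinely hard step is the moment estimate of the second paragraph. VanderKam treats $k=1$, where the pairing is the ordinary intersection form on $H_1(X_0(p))$ and the approximate functional equation for $|L(f,1)|^2$ is standard; for general $k$ the $\mathrm{Sym}^{2k-2}$-coefficients make the Hecke correspondence act through the full weight-$2k$ slash operation, so both the length of the approximate functional equation and the Petersson/Bessel weights depend on $k$ and must be controlled uniformly in $m,n\le D$. The delicate point is to check that the $k$-dependent archimedean factors neither spoil the positivity of the diagonal term nor inflate the Kloosterman error past the power saving supplied by Weil's bound; it is this uniformity that keeps the threshold at the optimal shape $D^2\ll p$ rather than the weaker bound of Merel's original elementary argument.
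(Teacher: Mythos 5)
Your analytic core and your deduction of part (a) follow the paper's route essentially verbatim: the paper likewise rewrites the relevant quadratic form as a weighted moment $\sum_f \omega_f\,\lambda_f(m)\lambda_f(n)\,|L^{ar}(f,k)|^2$, opens $|L^{ar}(f,k)|^2$ by an approximate functional equation (Lemma~\ref{Lfunctionatk}, with the weight $G_k$), inserts the Petersson formula, and shows the diagonal term dominates the Bessel--Kloosterman contribution for $D^2\ll p^{1-\delta}$ (Theorem~\ref{mainthm1}); part (a) is then read off from the identity $\bigcap_{[f]}\mathrm{Ann}([f])=\mathrm{Ann}(z^{k-1}\otimes\mathbf{e})$ of Lemma~\ref{identity}, which is exactly your ``spanning'' reformulation. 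You correctly flag the uniformity in $k$ of the archimedean factors as the hard point; the paper handles it by explicit contour shifts for $G_k$ and tracking the $((k-1)!)^2$ constants, and you would still need to supply those estimates, but the strategy is the same.

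The genuine gap is in part (b). You assert that the characteristic-zero independence of the coefficient vectors ``descends to $\F_l$'' because the intersection pairing is perfect away from $p$ and Hecke operators preserve the integral lattice. Linear independence over $\Z$ does not descend to $\F_l$: the vectors $(1,0)$ and $(1,l)$ are independent over $\Z$ yet congruent modulo $l$, so perfectness of a pairing on the ambient lattice is not enough --- one must control the index (and $l$-torsion) of the sublattice generated by $T_1 (z^{k-1}\otimes\mathbf{e}),\ldots,T_D(z^{k-1}\otimes\mathbf{e})$, or equivalently of the images of $T_1,\ldots,T_D$ in $\T/I_e$. The paper's mechanism is different and is where the content of (b) lives: Hida's perfect pairing $S_{2k}(\Ga_0(p);S)\times\T_S\to S$, $(f,T)\mapsto a_1(Tf)$, valid for \emph{every} $\Z$-algebra $S$ including $S=\F_l$ (Proposition~\ref{Hida}), combined with the Kamienny-type equivalence of Proposition~\ref{equiv}, which identifies $\F_l$-independence of the vectors $(a_1(\bar f_i),\ldots,a_D(\bar f_i))$ for forms killed by $I_e$ with $\F_l$-independence of $T_1,\ldots,T_D$ in $\T/(l,I_e)$. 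Note also that the $\bar f_i$ are produced there as an $\F_l$-dual basis of $S_{2k}(\Ga_0(p),\F_l)[I_e]$, not as naive reductions of the characteristic-zero eigenforms $f_i$ of part (a); your sketch conflates the two. To repair your argument you would need to insert this integral duality step (or an equivalent saturation argument) before reducing modulo $l$.
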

The main ingredient to prove Theorem \ref{mainthm} is a generalization of a theorem due to VanderKam \cite{MR1760688} that proves the linear independence of Hecke operators $T_1, T_2, \ldots, T_D$ acting on the winding element $\mathbf{e}$ (cf.\S~\ref{winding}) when $p> c_\delta \, D^{2+ \delta}$ for any given $\delta>0$ and $c_\delta$ an effective constant. 

Let $\mathbb{S}_{2k}(\Gamma_0(p))$ be the space of cuspidal modular symbols of weight $2k$ with $k\geq1$ as defined in \S~\ref{modsym}. This is the homology group of the modular curve $X_0(p)$  with coefficient in a locally constant sheaf (rather than the constant sheaf). Let $\mathbf{e}$ be the winding element belonging to $\mathbb{S}_{2k}(\Gamma_0(p))$,
In our present article, we prove the linear independence of the Hecke operators on the algebraic cycle of the form $z^{k-1} \otimes \mathbf{e}$ inside $\mathbb{S}_{2k}(\Gamma_0(p))$ for all $k \geq 1$.
This winding element is important because it gives us information of central values of the arithmetic $L$-functions of  newforms of arbitrary weight.

Linear independence of $L$ values is an active area of mathematics pursued in~\cite{MR4500744} and \cite{MR382183}. In some sense, this is mod $l$ version of Strum bound. However, what is curious is that Strum bound depends crucially on weights of space of modular forms. However, our bound is independent of weights. This is because in this paper  we are only interested in modular forms with non-vanishing central critical values. We end the computation by some numerical computation using softwares {\tt MAGMA } and {\tt SAGE} that gives a way to detect linear independence of Hecke operators on winding elements.

%-------------------------------------------------------------------------------------------------------------------------------------
\section{Preliminaries}
\subsection{Modular symbols}
\label{modsym}
We are  interested in the congruence subgroups of the form $\Gamma_0(p)$  that parametrizes cyclic subgroups of order $p$ for a prime $p \geq 5$. 
Let $\mathcal{H}= \lbrace \tau \in \mathbb{C}\mid \mathrm{Im}(\tau)>0 \rbrace$ be the upper-half plane and consider the compactification $\mathcal{H}^*= \mathcal{H}\cup \mathbb{P}^1{(\mathbb{Q})}$. Let us consider the modular curve $X_0(p):= \Gamma_0(p)\backslash \mathcal{H}^*= Y_0(p)\cup \partial(X_0(p))$ obtained by adding the cusps. 
We define the modular symbols $\mathbb{M}_{2k}(\Ga_0(p))$ associated to the congruence subgroup $\Ga_0(p)$ of weight $2k $ with $k\geq 1$  following \cite{MR2289048}.

Let $\mathbb{M}_2$ be the free abelian group with basis the set of symbols $\lbrace \alpha, \beta\rbrace$ with $\alpha, \beta \in \mathbb{P}^1(\mathbb{Q})$ modulo the $3$-term relations
\begin{align*}
    \lbrace \alpha, \beta\rbrace+ \lbrace  \beta, \gamma\rbrace+\lbrace \gamma, \alpha\rbrace=0 
\end{align*}
for all $\alpha, \beta, \gamma \in \mathbb{P}^1(\mathbb{Q})$, and for all torsion, i.e.,
\begin{align*}
    \mathbb{M}_2=(F\slash R)\slash(F\slash R)_{\mathrm{tor}},
\end{align*}
where
\begin{itemize}
\item 
$F$  is  the free abelian group consisting of all pairs $\{\alpha, \beta\}$,
\item 
$R$ is the subgroup generated by all elements of the form  $ \{ \alpha, \beta\}+ \{  \beta, \gamma \}+\{ \gamma, \alpha\}$.
\end{itemize}
 The group $\mathbb{M}_2$ is the group of modular symbols of weight $2$.  The left action of $\Gamma_0(p)$ on $\mathbb{M}_2$ is defined as follows:
\begin{align*}
    g\lbrace\alpha, \beta\rbrace=\lbrace g(\alpha), g(\beta)\rbrace,
\end{align*}
where $g\in \Gamma_0(p)$ acts via the fractional linear transformation
\begin{align*}
    g(\alpha)= \frac{a\alpha+b}{c\alpha+d}, \,\ \text{where}\,\ g=\left(\begin{array}{ccc} a &b\\ 
 c & d \end{array}\right).
\end{align*}

For any integer $n\geq 0$, let $\mathbb{Z}[X,Y]_n$ be the abelian group of homogeneous polynomials of degree $n$ in two variables $X, Y$. Recall that this defines a locally constant sheaf $\mathcal{F}_{n}$ on the modular curve $X_0(p)$. 
Note that $\mathbb{Z}[X,Y]_n$ is isomorphic to $\mathrm{Sym}^{n}(\mathbb{Z}\times\mathbb{Z}))$ as a group. For a fixed integer $k \geq 1$, we define
\begin{align*}
    \mathbb{M}_{2k}:=\mathbb{Z}[X,Y]_{2k-2}\otimes_{\mathbb{Z}} \mathbb{M}_2,
\end{align*}
which is a torsion-free abelian group whose elements are sums of expressions of the form $X^iY^{2k-2-i}\otimes \lbrace\alpha, \beta \rbrace$. The left action of $\Gamma_0(p)$ on $\mathbb{Z}[X,Y]_{2k-2}$ is defined as follows: Let $g=\left(\begin{array}{ccc} a &b\\ 
 c & d \end{array}\right)\in \Gamma_0(p)$ and $P(X,Y)\in \mathbb{Z}[X,Y]_{2k-2}$, then we have
 \begin{align*}
     (gP)(X,Y)=P(dX-bY,-cX+aY)
 \end{align*}
 and the left action of $\Gamma_0(p)$ on $\mathbb{M}_{2k}$ is given by
 \begin{align*}
     g(P\otimes \lbrace\alpha,\beta\rbrace)= (gP)\otimes\lbrace g(\alpha), g(\beta)\rbrace.
 \end{align*}
 %--------------------------------------------------------------------------------------------------------------------------
\begin{defn}
 The space $\mathbb{M}_{2k}(\Gamma_0(p))$ of weight $2k$ modular symbols for $\Gamma_0(p)$ is the quotient of $\mathbb{M}_{2k}$ by all relations $gx-x $ for $x\in \mathbb{M}_{2k}$, $g\in \Gamma_0(p)$, and by any torsion.
\end{defn}
Let $P \in\mathbb{Z}[X,Y]_{2k-2}$ and $g\in \Gamma_0(p)$, then the \textit{Manin symbol} associated to this pair is given by
\begin{align*}
    [P,g]= g(P\otimes\lbrace 0, \infty\rbrace).
\end{align*}
Note that the Manin symbols generate the space of modular symbols $\mathbb{M}_{2k}(\Gamma_0(p))$ (cf. \cite[Proposition 8.3]{MR2289048}).

%-----------------------------------------------------------------------------------------------
\subsection{Hecke operators acting on modular symbols}Here we describe the action of the Hecke operators on the space of modular symbols.
For a prime $q$, let
\begin{align*}
    R_q =\left\lbrace\left(\begin{array}{ccc} 1 &r\\ 
 0 & q \end{array}\right)\mid r=0,1,\ldots, q-1\right\rbrace \cup \left\lbrace\left(\begin{array}{ccc} q &0\\ 
 0 & 1  \end{array}\right)\right\rbrace.
\end{align*}
Then the action of the Hecke operator $T_q$ on $\mathbb{M}_{2k}(\Gamma_0(p))$ is given by
\begin{align*}
    T_q(P\otimes \lbrace\alpha,\beta\rbrace)= \sum_{g\in R_p}g(P\otimes \lbrace\alpha,\beta\rbrace).
\end{align*}
Let $\mathbb{B}$ be the free abelian group on the symbols $\lbrace\alpha\rbrace$ with $\alpha\in \mathbb{P}^1(\mathbb{Q})$, and we set
\begin{align*}
    \mathbb{B}_{2k}:= \mathbb{Z}[X,Y]_{2k-2}\otimes \mathbb{B}.
\end{align*}
The left action of $\Gamma_0(p)$ on  $\mathbb{B}_{2k}$ is given by
\begin{align*}
    g (P \otimes\lbrace \alpha\rbrace)=(gP)\otimes\lbrace g(\alpha)\rbrace\,\ \text{for}\,\  P\otimes\lbrace \alpha\rbrace\in \mathbb{B}_{2k},\, g\in \Gamma_0(p).
\end{align*}
Let $k\geq 1$ be an integer and let $\mathbb{B}_{2k}(\Gamma_0(p))$ be the quotient of $\mathbb{B}_{2k}$ by the relations $x-gx$ for all $g\in \Gamma_0(p)$, $x\in \mathbb{B}_{2k}$, and by any torsion. Thus, $\mathbb{B}_{2k}(\Gamma_0(p))$ is a torsion-free abelian group.
%----------------------------------------------------------------------------------------------------------------------------------------------------------
\begin{defn}
     The space $\mathbb{S}_{2k}(\Gamma_0(p))$ of cuspidal modular symbols of weight $2k$ is the kernel of boundary map
    $
        \delta_{2k}: \mathbb{M}_{2k}(\Gamma_0(p))\to \mathbb{B}_{2k}(\Gamma_0(p))
    $
    which is given by extending the map $\delta_{2k}(P\otimes\lbrace\alpha, \beta\rbrace)= P\otimes\lbrace\beta\rbrace-P\otimes\lbrace\alpha\rbrace$ linearly.
\end{defn}
Let $S_{2k}(\Gamma_0(p))$ be the space of cusp forms equipped with the Petersson inner product:
\begin{align*}
    \langle f, g \rangle_{\mathrm{pet}}= \int_{\Gamma\backslash\mathbb{H}} f(z) \overline{g(z)}\, (\mathrm{Im}(z))^{2k}\, \mu_{\mathrm{hyp}}(z)\,\ \text{for}\,\ f, g\in S_{2k}(\Gamma_0(p)).
\end{align*}
%---------------------------------------------------------------------------------------------------------------------------------------------------

    The dimensions of the space of the cuspidal modular symbols and the space of cusp forms are related as below (cf. \cite[Corollary 8.19]{MR2289048}) (see also \cite[Theorem 0.2]{MR571104} and \cite[\S 1.5]{MR1322319})
    \begin{align*}
        \mathrm{dim}_{\mathbb{C}}\, \mathbb{S}_{2k}(\Gamma_0(p), \mathbb{C})= 2\, \mathrm{dim}_{\mathbb{C}}\, {S}_{2k}(\Gamma_0(p))\quad \text{for} \quad k\geq1.
    \end{align*}

   Note that we may identify the space of modular symbols with certain homology group with coefficient in a locally constant sheaf  following \cite[Theorem 4]{MR0427234}. Let $\mathcal{F}_{2k-2}$ be the usual locally constant sheaf on $X_0(p)$ with a prime $p$. We have the following identification 
    \[
    \mathbb{S}_{2k}(\Gamma_0(p)) \cong H_1 \left(X_0(p), \, \mathcal{F}_{2k-2}\right).
    \]
  
   Note that the number of Hecke algebra generators on $\Z$ that act on the space of modular forms is bounded by a linear function of the prime $p$ (see \cite[Theorem 9.23]{MR2289048}). However, this information is not sufficient to determine the set of basis with non-vanishing $L$-functions.

%-------------------------------------------------------------------------------------------------------------------------
\subsection{Modular symbols and modular forms}
\label{Lfunction}
For any cusp form $f\in S_{2k}(\Gamma_0(p))$, the integration pairing (see \cite{MR2289048}, p. 180) is given by
\begin{align*}
   \big \langle z^m\lbrace 0, \infty\rbrace, f \big\rangle=\int_0^{ \infty} f(z) z^m dz\quad \text{for}\quad 0\leq m \leq 2k-2. 
\end{align*}
Let the Fourier expansion of a newform $f\in S_{2k}(\Gamma_0(p))$ be of the form $\displaystyle f=\sum_{n\geq 1}a_f(n) q^n$.  Then the $L^{ar}$-function attached to $f$ is defined by
 \begin{align*}
     L^{ar}(f, s):=\sum_{n\geq 1}\frac{a_f(n)}{n^s},
 \end{align*}
 which is absolutely convergent for $\Re(s)>\frac{2k+1}{2}.$
 The $L^{ar}$-function has an Euler product expansion of the form
 \begin{align*}
 L^{ar}(f, s)= \left(1- a_f(p)\, p^{-s}\right)^{-1} \prod_{\substack{q\not= p\\ q \, \mathrm{prime}}} \left(1- a_f(q)\, q^{-s}+q^{2k-1}q^{-2s}\right)^{-1}.
 \end{align*}
 The completed $L^{ar}$-function is defined by
 \begin{align}\label{Lambda}
     \Lambda(f, s):= \left(\frac{\sqrt{p}}{2\pi} \right)^s\Gamma(s)\, L^{ar}(f, s)
 \end{align}
 which satisfies the functional equation $\Lambda(f, s)= i^{-2k}\, \epsilon_f\,  \Lambda(f, 2k- s)$, where $\epsilon_f=\pm 1$.
 
Also, for a newform $f\in S_{2k}(\Gamma_0(p))$ we have the following relation (see \cite{MR3941495}, equation(14), p. 1385)
\begin{align}\label{lfun}
   \big \langle z^m\lbrace 0, \infty\rbrace, f \big\rangle = \frac{m!\, i^{m+1}}{(2\pi)^{m+1}}\, L^{ar}(f, m+1)\,\ \text{for}\,\ 0\leq m \leq 2k-2,
\end{align}
where the $L^{ar}$-function has critical values at the integers $m+1$.
%----------------------------------------------------------------------------------------------
\subsection{Different normalization of \textit{L}-function} 
Two different normalizations of $L$-function exist in the literature. Let $f(z)=\sum\limits_{n \geq 1} \lambda_f(n) n^{\frac{2k-1}{2}} e^{2\pi i n z}$ be a Hecke eigenform in $S_{2k}(\Gamma_0(p))$. We define:
 \[
 L^{an}(f,s):=\sum_{n \geq 1} \frac{\lambda_f(n)}{n^s}
\]
which is absolutely convergent for $\Re(s)>1$. By our definition we also have
 \[
 L^{ar}(f,s)=\sum_{n \geq 1}  \frac{\lambda_f(n)n^{\frac{2k-1}{2}}}{n^s}.
 \]
  Then we get $L^{ar}(f,s)=L^{an}(f,s-(2k-1)/2)$. Since it is well known that the critical strip for $L^{an}(f,s)$ is $0\leq \Re(s) \leq 1$, we get $\frac{2k-1}{2}\leq \Re(s)\leq \frac{2k+1}{2}$ as the critical strip for $L^{ar}(f,s)$.
  The completed analytic $L$-function is defined by
  $$\Lambda^{an} (f,s):=  p^{\frac{s}{2}}L_{\infty}(f,s) L^{an}(f,s),$$
  where $L_{\infty}(f,s)$ is given by
 \[
 L_{\infty}(f,s)= \pi^{-s} \, \Ga \left(\frac{s}{2} +\frac{2k-1}{4}\right)  \Ga \left(\frac{s}{2} +\frac{2k+1}{4}\right).
 \]
 The completed analytic $L$-function satisfies the function equation 
 \[
 \Lambda^{an} (f,s)= \epsilon_f \Lambda^{an} (f,1-s),
 \]
 where $\epsilon_f=\pm 1$. 
    
%%%%%%%%%%%%%%%%%%%%%%%%%%%%%%%%%%%%%%%%%%%%%%%%%%%%%%%%%%%%%%%%%%%%%%%%%%%%%%%%%%%%%%%%%%%%%%%%%%%%%%%%%%%%%%%%%%%%%%%%%%%%%%%%%%%%%%%%%%%%%%
\subsection{Winding elements}
\label{winding}
 We denote by $\mathbb{S}_2(\Gamma_0(p))$ the space of weight $2$ cuspidal modular symbols (see \cite[Theorem 1.9]{MR0345909}).
The Hecke algebra $\mathbb{T}:=\mathbb{T}_\Z$  is a $\Z$ algebra consisting of linear endomorphisms that act on the $S_{2k}(\Gamma_0(p))$ and on dual space of  modular symbols $\HH_1(X_0(p), \mathbb{Z})\cong \mathbb{S}_2(\Gamma_0(p))$. 
The winding elements are the elements of the space of modular symbols whose annihilators define ideals
of the Hecke algebras with the $L^{ar}$-functions of the corresponding quotients of the Jacobian non-zero.

\begin{defn}
    Let $\{0,\infty\}$ denote the projection
of the path from $0$ to $\infty$ in $\tH \cup {\mathbb{P}}^1(\mathbb{Q})$ to $X_0(p)(\C)$. 
We have an isomorphism $\mathrm{H}_1(X_0(p),\Z)\otimes \mathbb{R} \simeq \mathrm{Hom}_{\C}(\mathrm{H}^0(X_0(p),\Omega^1),\mathbb{C})$.
Let $ \mathbf{e} \in\mathrm{H}_1(X_0(p),\Z)\otimes \mathbb{R} $ correspond to the homomorphism
$\omega \rightarrow \int_0^{\infty} \omega$.
 The element $\mathbf{e}$ is called the {\it winding element}. 
\end{defn}
We wish to emphasize that $\mathbf{e}$ depends on the level $p$ that we are hiding from notation for brevity. Recall that the explicit expression of $\mathbf{e}$ is the
key tool in the Merel's proof of the uniform boundedness theorem~(see \cite{MR1369424}). One key step in Merel's proof is to show the linear independence of the Hecke operators $T_1, T_2, \ldots, T_D$ on the image $\mathbf{e}$  for $k=1$ if $p > D^{3D^2}$ \cite[Proposition 3, p. 440]{MR1369424}. In \cite{MR1760688}, VanderKam improved the Merel's bound for $D$ by using analytic techniques. In particular, VanderKam proved that Hecke operators $T_1, T_2, \ldots, T_D$ act linearly independent on the cycle $\mathbf{e}$ when $p> c_\delta \, D^{2+ \delta}$.

In this article, we extend the linear independence result of VanderKam to the modular forms of arbitrary weight $2k$ with $k\geq 1$. The linear independence of Hecke operators acting on algebraic cycles have several implications towards bound on the basis of modular forms  with simultaneous non-vanishing of $L$-functions inside the critical strip (e.g., see \cite[p. 286]{MR2019017} and \cite{MR1934306}). 

\section{Linear dependence of Hecke operators}
In this section we are interested in the linear dependence of Hecke operators inside the space of cuspidal symbols $\mathbb{S}_{2k}(\Gamma_0(p))$ with a prime $p$.  Suppose that the Hecke operators $T_1, T_2, \ldots, T_D$ are linearly dependent on $z^{k-1} \otimes \mathbf{e}$. 
%for all $0\leq n\leq 2k-2$. 
 Since the winding element $\mathbf{e}$ is the image of $\lbrace 0, \infty\rbrace$,  there exist $\alpha_1, \ldots, \alpha_D$ (at least one of them non-zero) with
\begin{align*}
    \sum_{i=1}^D
 \alpha _i\, T_i( z^{k-1}\otimes\lbrace 0, \infty\rbrace)=0.
 \end{align*} 
Let $\mathcal{B}_{2k}(p)$ denote the Hecke basis of $S_{2k}(\Gamma_0(p))$. In other words, $\mathcal{B}_{2k}(p)$ consists of Hecke eigenforms. 
 Let $f$ be a Hecke eigenform, then by the self-adjoint property (see \cite[Theorem 8.21]{MR2289048}) of the Hecke operator, we have 
 \begin{align*}
     \langle T_i ( z^{k-1}\otimes\lbrace 0, \infty\rbrace), f \rangle=\langle  (z^{k-1}\otimes\lbrace 0, \infty\rbrace), \, T_if \rangle\quad \text{for}\,\ i= 1,\ldots, D.
 \end{align*}
 Let $\lambda_f(i)$ be the Hecke eigenvalue, i.e., $T_if=\lambda_f(i)f$. Then we have
  \begin{align*}
     \langle T_i (z^{k-1}\otimes\lbrace 0, \infty\rbrace), f \rangle=\lambda_f(i)\, \langle  (z^{k-1}\otimes\lbrace 0, \infty\rbrace), \, f \rangle\quad \text{for}\,\ i= 1,\ldots, D.
 \end{align*}
 Now, the linear dependence of the Hecke operators is equivalent to the existence of non-trivial solutions $\alpha_1, \ldots, \alpha_{D}$ to the following system of equations:
 \begin{align}\label{modsquare}
     0= \sum_{f \in \mathcal{B}_{2k}(p)} \omega_f \, \bigg| \sum_{i=1}^D
 \alpha _i\, \langle T_i(z^{k-1}\otimes\lbrace 0, \infty\rbrace), f \rangle \bigg|^2,
 \end{align}
 where $\omega_f=\frac{1}{4\pi} \langle f, f \rangle_{\mathrm{pet}}$. The equation \eqref{modsquare} is equivalent to
 \begin{align*}
     0=\sum_{i=1}^D\sum_{j=1}^D \alpha _i \bar{\alpha} _j \sum_{f\in \mathcal{B}_{2k}(p)} \omega_f \lambda_f(i) \lambda_f(j)\big|\langle (z^{k-1}\otimes\lbrace 0, \infty\rbrace), f \rangle \big|^2.
 \end{align*}
Then, using \eqref{lfun}, we get 
 \begin{align}\label{modsquareLfunction}
     0=\sum_{i=1}^D\sum_{j=1}^D \alpha _i \bar{\alpha} _j \sum_{f\in \mathcal{B}_{2k}(p)} \omega_f \lambda_f(i) \lambda_f(j)\big|L^{ar}(f, k) \big|^2.
 \end{align}

Recall the following Petersson formula (cf. \cite[Lemma 1.2]{MR1760688}) that works for arbitrary weight $k\geq 1$ (see also  \cite[\S 3, p. 1022]{MR3060872}).  In the following lemma, a basis of $S_{2k}(\Gamma_0(p))$
is a Hecke basis if it is an eigenform of Hecke operators $T_n$ for all $n$ with $(n,p)=1$. 
Recall that Hecke eigenvalues satisfy the following recurssion formula:

  \begin{lem}[Petersson formula]\label{petersson}
        If $\mathcal{B}_{2k}(p)$ is a Hecke basis of $S_{2k}(\Gamma_0(p))$ then we have
        \begin{align*}
             \sum_{f\in \mathcal{B}_{2k}(p)}\omega_f  \lambda_f(r) \lambda_f(s)= \delta_{rs}+2 \pi i^{2k} \sum_{p|c}\frac{\mathcal{S}(r, s, c )}{c} \,J_{2k-1}\left(\frac{4\pi \sqrt{rs}}{c}\right),
        \end{align*}
        where $\mathcal{S}(r, s, c)$ denotes the Kloosterman sum and $J_\nu(x)$ denotes the $J$-Bessel function of order $\nu$.
    \end{lem}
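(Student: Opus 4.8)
The plan is to establish this as the classical holomorphic Petersson trace formula of weight $2k$ for $\Gamma_0(p)$, derived through Poincar\'e series. For each positive integer $m$ I would introduce the $m$-th Poincar\'e series of weight $2k$ attached to the cusp $\infty$,
\[
P_m(z)=\sum_{\gamma\in\Gamma_\infty\backslash\Gamma_0(p)}(cz+d)^{-2k}\,e^{2\pi i m\gamma z},\qquad \gamma=\begin{pmatrix}a&b\\c&d\end{pmatrix},
\]
where $\Gamma_\infty$ is the stabilizer of $\infty$ in $\Gamma_0(p)$. Because the cosets of $\Gamma_\infty\backslash\Gamma_0(p)$ are parametrized, up to sign, by coprime pairs $(c,d)$ with $c\equiv 0\imod{p}$, the congruence restriction $p\mid c$ in the statement will appear automatically once the $c\neq 0$ terms are collected.

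The argument then rests on two computations with $P_m$. \textbf{(i)} The \emph{reproducing property}: for any $f=\sum_{n\geq 1}a_f(n)q^n\in S_{2k}(\Gamma_0(p))$, unfolding the Petersson integral against the coset sum over $\Gamma_\infty\backslash\tH$ yields
\[
\langle P_m,f\rangle_{\mathrm{pet}}=\frac{\Gamma(2k-1)}{(4\pi m)^{2k-1}}\,\overline{a_f(m)},
\]
so $P_m$ extracts the $m$-th Fourier coefficient. \textbf{(ii)} The \emph{Fourier expansion} of $P_m$: isolating the identity coset (which contributes $\delta_{mn}$ to the $n$-th coefficient) from the terms with $c\neq 0$, and evaluating the resulting oscillatory integral through the standard integral representation of the $J$-Bessel function, produces an $n$-th coefficient equal to $\delta_{mn}$ plus a weighted sum of Kloosterman sums $\mathcal{S}(m,n,c)$ against $J_{2k-1}(4\pi\sqrt{mn}/c)$ over $p\mid c$.

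To assemble the formula I would compute $\langle P_r,P_s\rangle_{\mathrm{pet}}$ in two ways. Expanding one factor in the Hecke basis $\mathcal{B}_{2k}(p)$ and applying (i) twice writes this inner product as a constant multiple of $\sum_{f}\overline{a_f(r)}\,a_f(s)/\langle f,f\rangle_{\mathrm{pet}}$; computing the same quantity directly from the Fourier expansion (ii) gives the same constant times $\delta_{rs}$ together with the Kloosterman--Bessel term. Equating the two and inserting the analytic normalization $a_f(n)=\lambda_f(n)\,n^{(2k-1)/2}$ from \S\ref{Lfunction} cancels the factors $(rs)^{(2k-1)/2}$ against the weight-dependent constant $\Gamma(2k-1)/(4\pi\sqrt{rs})^{2k-1}$; the remaining normalizing constants combine precisely into the harmonic weight $\omega_f$ of the statement, and the phase produced by the Bessel integral supplies the factor $2\pi i^{2k}$, giving the claimed identity.

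The main obstacle is twofold. The more laborious part is the constant-tracking in (ii): the $\Gamma$-factors, the powers of $4\pi$, and especially the sign and phase $i^{2k}$ emerging from the contour shift in the Bessel integral must all be kept consistent so that they cancel exactly against the reproducing constant in (i). The subtler analytic point is convergence. For weight $2k\geq 4$, i.e.\ $k\geq 2$, the Poincar\'e series converges absolutely and every manipulation is justified termwise; but at weight $2$, i.e.\ $k=1$, the defining series is only conditionally convergent, so here one must insert a convergence factor $|cz+d|^{-2s}$, unfold, and analytically continue to $s=0$ (Hecke's trick), or equivalently argue that the holomorphic projection of the regularized series still reproduces Fourier coefficients. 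Treating this boundary case uniformly with the $k\geq 2$ case is the delicate part of the extension to all $k\geq 1$.
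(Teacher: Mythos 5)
The paper offers no proof of this lemma: it is quoted directly from VanderKam's Lemma 1.2 (equivalently, the classical holomorphic Petersson trace formula, cf.\ Iwaneic--Kowalski), so there is no internal argument to compare yours against. Your proposal reconstructs precisely the standard derivation behind that citation --- Poincar\'e series $P_m$ at the cusp $\infty$, the reproducing property via unfolding, the Fourier expansion of $P_m$ in terms of Kloosterman sums and $J_{2k-1}$, and the double evaluation of $\langle P_r,P_s\rangle_{\mathrm{pet}}$ --- and it is correct in outline. Three remarks. First, the phase: the statement has $i^{2k}$ where the usual normalization produces $i^{-2k}$; these coincide since both equal $(-1)^k$, so your constant-tracking should land on the stated formula. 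Second, your caveat about $k=1$ is the genuinely substantive point: for weight $2$ the Poincar\'e series is only conditionally convergent, and since the paper invokes the lemma for all $k\geq 1$ (indeed the $k=1$ case is VanderKam's original setting), the regularization you describe (Hecke's trick, or holomorphic projection of the non-holomorphic completion) is not optional but required; VanderKam himself handles weight $2$ by citing the known weight-$2$ Petersson formula rather than re-deriving it. Third, the harmonic weight as literally printed in the paper, $\omega_f=\frac{1}{4\pi}\langle f,f\rangle_{\mathrm{pet}}$, must be a typo for the reciprocal normalization $\frac{\Gamma(2k-1)}{(4\pi)^{2k-1}\langle f,f\rangle_{\mathrm{pet}}}$ --- which is exactly what your unfolding computation yields --- so your proof, if written out, would also serve to correct the statement.
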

   Note that, we use the Petersson formula (i.e., Lemma \ref{petersson}) many times in this paper. For notational convenience, whenever we use the Petersson formula by $S_{\mathrm{main}}$ we denote the term coming from the $\delta_{rs}$-part of the Petersson formula and $S_{\mathrm{off}}$ denote the remaining terms coming from the Kloosterman sum in the Petersson formula.

 \begin{lem}\label{Lfunctionatk}
 Let $f\in  S_{2k}(\Gamma_0(p))$ be a cusp form of weight $2k$ and let $L^{ar}(f,s)$ be the $L^{ar}$-function associated with the cusp form $f$. Then at the central critical point $s = k$, the $L^{ar}$-function has the following expression:
 \begin{align*}
     \big|L^{ar}(f, k) \big|^2=\frac{2}{\left((k-1) !\right)^2} \sum_{l,\, m \geq 1} G_{k}\left(\frac{lm}{p}\right)\frac{\lambda_f(l) \lambda_f(m)}{\sqrt{lm}},
 \end{align*}
 where $G_{k}(x)$ has the following integral representation 
 \begin{align*}
     G_{k}(x)=\frac{1}{2\pi i}\int_{\mathrm{Re}(t)=3\slash 4} \frac{\Gamma(\alpha+t)^2}{(2\pi)^{2t}x^t}\frac{dt}{t}.
 \end{align*}
 \end{lem}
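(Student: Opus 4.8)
The plan is to derive the stated identity from an approximate functional equation for the square $L^{ar}(f,k)^2$, built from the completed $L$-function $\Lambda(f,s)=(\sqrt{p}/2\pi)^s\,\Gamma(s)\,L^{ar}(f,s)$ and its functional equation $\Lambda(f,s)=i^{-2k}\epsilon_f\,\Lambda(f,2k-s)$ recorded in \eqref{Lambda}. Since $f$ is a newform on $\Gamma_0(p)$ with trivial character, its Hecke eigenvalues $\lambda_f(n)$ are real, so $L^{ar}(f,k)$ is real and $|L^{ar}(f,k)|^2=L^{ar}(f,k)^2$; it therefore suffices to produce the series expansion of the square.

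First I would introduce the contour integral $I=\frac{1}{2\pi i}\int_{\Re(t)=3/4}\Lambda(f,k+t)^2\,\frac{dt}{t}$, whose line of integration lies in the half-plane $\Re(k+t)>(2k+1)/2$ of absolute convergence. Shifting the contour to $\Re(t)=-3/4$ crosses a simple pole of the integrand at $t=0$ with residue $\Lambda(f,k)^2$. On the shifted line I would substitute $t\mapsto -t$ and apply the functional equation in the form $\Lambda(f,k-t)=i^{-2k}\epsilon_f\,\Lambda(f,k+t)$; because $(i^{-2k}\epsilon_f)^2=1$ the root number disappears upon squaring, and the shifted integral equals $-I$. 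This folding gives $2I=\Lambda(f,k)^2$, that is, $\Lambda(f,k)^2=\frac{2}{2\pi i}\int_{\Re(t)=3/4}\Lambda(f,k+t)^2\,\frac{dt}{t}$.

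Next I would expand the integrand. On the line $\Re(t)=3/4$ one has $L^{ar}(f,k+t)^2=\sum_{l,m\geq 1}a_f(l)a_f(m)(lm)^{-(k+t)}$, and passing to the analytic eigenvalues via $a_f(n)=\lambda_f(n)\,n^{(2k-1)/2}$ collapses the exponent to $-(1/2+t)$, producing $\sum_{l,m}\lambda_f(l)\lambda_f(m)(lm)^{-1/2}(lm)^{-t}$. Inserting this into $\Lambda(f,k+t)^2=(\sqrt{p}/2\pi)^{2(k+t)}\,\Gamma(k+t)^2\,L^{ar}(f,k+t)^2$ and interchanging the absolutely convergent sum with the integral, the outer factor $(2\pi/\sqrt{p})^{2k}\Gamma(k)^{-2}$ coming from $\Lambda(f,k)^2=(\sqrt{p}/2\pi)^{2k}\Gamma(k)^2L^{ar}(f,k)^2$ cancels the ambient power of $p$ and leaves exactly $(\sqrt{p}/2\pi)^{2t}(lm)^{-t}=(2\pi)^{-2t}(lm/p)^{-t}$ inside the $t$-integral. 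Recognising the resulting inner integral as $G_k(lm/p)$ with $\alpha=k$, and using $\Gamma(k)^2=((k-1)!)^2$ together with the factor $2$ produced in the previous step, yields the claimed formula.

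The main obstacle is the analytic justification of the contour shift together with the interchange of summation and integration. This rests on the fact that $\Lambda(f,s)$ extends to an entire function, on the exponential decay of $\Gamma(k+t)^2$ along vertical lines supplied by Stirling's formula, and on polynomial growth bounds for $L^{ar}(f,s)$ in vertical strips (for instance the convexity bound); together these guarantee that no contribution is lost at the horizontal ends of the shifted contour and that Fubini's theorem applies on the line $\Re(t)=3/4$.
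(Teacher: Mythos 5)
Your argument is correct and follows essentially the same route as the paper's proof (itself modeled on VanderKam's Lemma 1.1): integrate the squared completed $L$-function against $dt/t$ on $\Re(t)=3/4$, shift the contour, use the functional equation (with the root number squaring to $1$) to fold the shifted line back onto the original one, pick up the residue $\Lambda(f,k)^2$ at $t=0$, and then expand the Dirichlet series termwise to recognise $G_k(lm/p)$ with $\alpha=k$. The only differences are cosmetic — you carry the full $\Lambda(f,k+t)^2$ rather than the $p^t$-normalised even function the paper uses, and you spell out the Stirling/convexity estimates justifying the shift and the interchange of sum and integral, which the paper leaves implicit.
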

 \begin{proof}
%Then we have $L(f,s)=\mathcal{L}(f, s')$, where $s'=s-\frac{k-1}{2}$, and it is well known that the critical strip for $\mathcal{L}(f, s')$ is $0\leq \mathrm{Re}(s')\leq 1$. This proves the first part of the lemma.
Note that, although the proof of this lemma is closely resembles VanderKam \cite[Lemma 1.1]{MR1760688}, we include the complete proof here in order to provide a comprehensive explanation in our paper.

For values of $s$ outside the critical strip, the completed $L^{ar}$-function satisfies the functional equation $$
     \Lambda(f,s)=i^{-2k} \epsilon_f \Lambda(f, 2k-s),
$$
 where we have the expression for $\Lambda(f, s)$ as given in equation \eqref{Lambda}, and $\epsilon_f$ represents a constant that can take values $\pm 1$. 
 For $s=k+t$ with $t>\frac{1}{2}$ the $L$-function $L^{ar}(f, k+t)$ satisfies this functional equation, i.e., we have
 \begin{align*}
     &\left(\frac{\sqrt{p}}{2\pi}\right)^{k+t}\Gamma(k+t)\, L^{ar}(f, k+t)=i ^{-2k}\epsilon_f\left(\frac{\sqrt{p}}{2\pi}\right)^{k-t}\Gamma(k-t)\, L^{ar}(f, k-t)\\
 \implies&
     \left(\frac{p}{4 \pi^2}\right)^t \Gamma(k+t)^2 L^{ar}(f, k+t)^2= \left(\frac{p}{4 \pi^2}\right)^{-t} \Gamma(k-t)^2 L^{ar}(f, k-t)^.
 \end{align*}
 This implies $\left(\frac{p}{4 \pi^2}\right)^t \Gamma(k+t)^2 L^{ar}(f, k+t)^2$ is an even function. Hence, we can integrate this function against $\frac{dt}{t}$ at $\mathrm{Re}(t)=\frac{3}{4}$ and shift to $\mathrm{Re}(t)=-\frac{3}{4}$ by picking up the residue at $t=0$.
 From the defintion we have
 \begin{align*}
     L^{ar}(f, k+t)^2= \sum_{l,\, m \geq 1}\frac{\lambda_f(l) \lambda_f(m)}{\sqrt{lm}} (lm)^{-t}.
 \end{align*}
 Now, since $L^{ar}(f,k)^2=|L ^{ar}(f, k)|^2$, we have
 \begin{align*}
     \Gamma(k)^2 \cdot |L^{ar}(f, k)|^2=& \mathrm{Res}_{t=0} \left(\frac{p}{4 \pi^2}\right)^t \Gamma(k+t)^2 L^{ar}(f, k+t)^2 \, \frac{1}{t}\\
     =& \frac{1}{\pi i} \int_{\mathrm{Re}(t)=\frac{3}{4}}\left(\frac{p}{4 \pi^2}\right)^t \Gamma(k+t)^2 L^{ar}(f, k+t)^2 \, \frac{dt}{t}\\
     =& \frac{1}{\pi}\sum_{l,\, m \geq 1}\frac{\lambda_f(l) \lambda_f(m)}{\sqrt{lm}} \int_{\mathrm{Re}(t)=\frac{3}{4}}\left(\frac{p}{4 \pi^2}\right)^t \frac{\Gamma(k+t)^2}{(lm)^t}  \, \frac{dt}{t}\\
     =& 2 \sum_{l,\, m \geq 1} G_k\left(\frac{lm}{p}\right)\frac{\lambda_f(l) \lambda_f(m)}{\sqrt{lm}}.
 \end{align*}
 This completes the proof.
 \end{proof}
 %------------------------------------------------------------------------------------
 \begin{lem}\label{Gkbound}
     For $x\geq 1$ we have $G_k(x)\ll e^{-c\sqrt{x}}$ with a fixed constant $c>0$.
 \end{lem}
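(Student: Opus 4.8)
The plan is to exploit the fact that, although the extra factor $1/t$ in the Mellin--Barnes integral defining $G_k(x)$ obstructs a direct identification with a Bessel function, a single differentiation in $x$ removes it and turns $G_k'(x)$ into a genuine $K$-Bessel function whose exponential decay is classical. Throughout, $k$ is fixed, so all implied constants may depend on $k$, and it suffices to treat $x$ large: on the fixed line $\mathrm{Re}(t)=3/4$ the integrand is dominated by $|\Gamma(k+3/4+i\tau)|^2\ll e^{-\pi|\tau|}$ against $|t|^{-1}x^{-3/4}$, so the defining integral converges absolutely and in fact $|G_k(x)|\le C x^{-3/4}$; this already gives boundedness on $1\le x\le X_0$ and decay as $x\to\infty$.

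First I would differentiate under the integral sign. Since $\frac{d}{dx}x^{-t}=-t\,x^{-t-1}$, the factor $t$ cancels the $1/t$, giving
\begin{align*}
 G_k'(x) = -\frac{1}{x}\cdot\frac{1}{2\pi i}\int_{\mathrm{Re}(t)=3/4}\frac{\Gamma(k+t)^2}{(2\pi)^{2t}}\,x^{-t}\,dt,
\end{align*}
the differentiation being justified by the absolute convergence that persists after deleting $1/t$. Substituting $s=k+t$ and invoking the classical Mellin--Barnes representation $\frac{1}{2\pi i}\int_{(\sigma)}\Gamma(s)^2\,y^{-s}\,ds=2K_0(2\sqrt{y})$ (valid for $\sigma>0$) with $y=4\pi^2 x$, so that $2\sqrt{y}=4\pi\sqrt{x}$, I obtain the clean identity
\begin{align*}
 G_k'(x) = -2\,(2\pi)^{2k}\,x^{k-1}\,K_0\!\left(4\pi\sqrt{x}\right).
\end{align*}

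Next I would integrate this back up. Because $|G_k(x)|\le Cx^{-3/4}\to 0$ as $x\to\infty$, integrating $G_k'$ from $x$ to $\infty$ yields $G_k(x)=2(2\pi)^{2k}\int_x^\infty u^{k-1}K_0(4\pi\sqrt{u})\,du$. Inserting the standard bound $K_0(z)\ll z^{-1/2}e^{-z}$ (applicable since $4\pi\sqrt{u}\ge 4\pi$ for $u\ge x\ge 1$) produces an integrand $\ll u^{k-5/4}e^{-4\pi\sqrt{u}}$; the substitution $u=v^2$ reduces the tail to $\int_{\sqrt{x}}^\infty v^{2k-3/2}e^{-4\pi v}\,dv$, a Laplace-type integral dominated by its lower endpoint and hence $\ll x^{k-3/4}e^{-4\pi\sqrt{x}}$. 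Thus $G_k(x)\ll x^{k-3/4}e^{-4\pi\sqrt{x}}$, and since $x^{k-3/4}e^{-(4\pi-c)\sqrt{x}}$ is bounded on $x\ge 1$ for any $0<c<4\pi$, the claimed bound $G_k(x)\ll e^{-c\sqrt{x}}$ follows (in particular for the quoted fixed $c>0$).

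The main obstacle I anticipate is technical rather than conceptual: rigorously justifying the interchange of differentiation and integration and the limit $G_k(x)\to 0$, both of which rest on the uniform exponential decay of $\Gamma$ along vertical lines, together with the explicit normalization of the $K_0$ Mellin pair. A self-contained fallback avoids the Bessel identity altogether by shifting the contour to the saddle point $\mathrm{Re}(t)=2\pi\sqrt{x}-k$: a Stirling computation shows the integrand there has size $e^{-4\pi\sqrt{x}}$ up to polynomial factors, and the inequality $|\Gamma(k+\sigma+i\tau)|\le\Gamma(k+\sigma)$ together with the Stirling tail decay controls the remaining $\tau$-integral. I would keep this saddle-point estimate in reserve but present the differentiation-plus-$K_0$ argument as the cleaner primary proof.
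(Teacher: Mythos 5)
Your proof is correct, but it takes a genuinely different route from the paper. The paper's argument is a one-line saddle-point estimate: shift the contour in the defining integral from $\mathrm{Re}(t)=3/4$ to $\mathrm{Re}(t)=\sqrt{x}$ and apply Stirling's formula to the integrand $\Gamma(k+t)^2(2\pi)^{-2t}x^{-t}t^{-1}$, which on that line is of size $e^{-c\sqrt{x}}$ for some unspecified $c>0$; this is exactly the ``fallback'' you keep in reserve (the paper does not optimize the shift, whose true saddle sits at $\mathrm{Re}(t)\approx 2\pi\sqrt{x}-k$). Your primary route instead differentiates under the integral sign to cancel the $1/t$, recognizes $G_k'(x)=-2(2\pi)^{2k}x^{k-1}K_0(4\pi\sqrt{x})$ via the Mellin--Barnes pair $\frac{1}{2\pi i}\int_{(\sigma)}\Gamma(s)^2y^{-s}\,ds=2K_0(2\sqrt{y})$, and integrates back using $G_k(\infty)=0$ (which your preliminary bound $|G_k(x)|\ll x^{-3/4}$ on the fixed line supplies). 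All steps check out: the interchange of $d/dx$ with the integral is justified by the uniform exponential decay of $\Gamma$ on vertical lines, the $K_0$ normalization is right, and the tail integral $\int_{\sqrt{x}}^{\infty}v^{2k-3/2}e^{-4\pi v}\,dv\ll x^{k-3/4}e^{-4\pi\sqrt{x}}$ is standard. What your approach buys is an explicit closed form for $G_k'$ and the sharp decay rate $G_k(x)\ll x^{k-3/4}e^{-4\pi\sqrt{x}}$, hence any $c<4\pi$ in the lemma; what the paper's approach buys is brevity, at the cost of an unoptimized and essentially uncomputed constant. Either suffices for the statement as given, which only asks for some fixed $c>0$.
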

 \begin{proof}
     We shift the contour of the integral to $\mathrm{Re}(t)=\sqrt{x}$ and by using the Stirling formula from \cite{MR2061214}, i.e.,
 $\Gamma(s)= \sqrt{2\pi}\,\ s^{s-\frac{1}{2}}\,e^{-s}\left(1+O\left(\frac{1}{|s|}\right)\right)
 $, in the integral expression of $G_k(x)$, we get
 \begin{align*}
     G_k(x)\ll \int_{\mathrm{Re}(t)=\sqrt{x}}\frac{(k+t)^{2k+2t-1}}{(2\pi)^{2t}x^t} \, e^{-2(k+t)}\, \frac{dt}{t}\ll e^{-c\sqrt{x}}
 \end{align*}
 with a fixed constant $c>0$.
 \end{proof}
 %------------------------------------------------------------------------------------------------------
 \begin{rem}
 By shifting the contour to the left of the origin, one can easily show that the function $G_k(x)$ is bounded for $0<x<1$. 
 \end{rem}
%%%%%%%%%%%%%%%%%%%%%%%%%%%%%%%%%%%%%%%%%%%%%%%%%%%%%%%%%%%%%%%%%%%%%%%%%%%%%%%%%%%%%%%%%%%%%%%%%%%%%%%%%%%%%%%%%%%%%%%%%%%%%%%%%%%%%%%%%%%%%%%%%%%%%%%%
\begin{prop}\label{proposition}
The Hecke operators $T_1, T_2, \ldots, T_D$ are linearly dependent on $z^{k-1} \otimes \lbrace0,  \infty\rbrace$, is equivalent to the existence of the non-trivial solutions to the equation
\begin{align*}
    \sum_{d_1,d_2<D}\frac{1}{\sqrt{d_1d_2}} \sum_{Id_1, Jd_2<D}\alpha_{Id_1}\bar{\alpha}_{Jd_2}\sum_{L, M\geq 1}\frac{G_k(LMd_1d_2\slash p)}{\sqrt{LM}}\sum_{f\in \mathcal{B}_{2k}(p)}\omega_f\lambda_f(IL)\lambda_f(JM)=0.
\end{align*}
\end{prop}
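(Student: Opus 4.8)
The plan is to start from the reformulation already reached in equation~\eqref{modsquareLfunction} and to make the quantity $|L^{ar}(f,k)|^2$ explicit by means of Lemma~\ref{Lfunctionatk}, after which everything that remains is combinatorial rearrangement. Concretely, I would substitute
\[
|L^{ar}(f,k)|^2 = \frac{2}{((k-1)!)^2} \sum_{l,m \ge 1} G_k\!\left(\frac{lm}{p}\right)\frac{\lambda_f(l)\lambda_f(m)}{\sqrt{lm}}
\]
into \eqref{modsquareLfunction}. Since the prefactor $2/((k-1)!)^2$ is a fixed positive constant, independent of $f$ and of the $\alpha_i$, it factors out of the entire expression and may be discarded without changing whether a non-trivial solution $(\alpha_1,\ldots,\alpha_D)$ exists. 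This leaves
\[
0 = \sum_{i,j=1}^D \alpha_i \bar\alpha_j \sum_{l,m \ge 1} \frac{G_k(lm/p)}{\sqrt{lm}} \sum_{f \in \mathcal{B}_{2k}(p)} \omega_f\, \lambda_f(i)\lambda_f(l)\lambda_f(j)\lambda_f(m).
\]

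Next I would group the four eigenvalues in the natural pairing, $\lambda_f(i)\lambda_f(l)$ together with $\lambda_f(j)\lambda_f(m)$, and apply the Hecke multiplicativity relation to each pair. Because $1 \le i, j \le D < p$ and $p$ is prime, both $i$ and $j$ are coprime to $p$, so the relation $\lambda_f(i)\lambda_f(l) = \sum_{d_1 \mid (i,l)} \lambda_f(il/d_1^2)$ is valid for \emph{every} $l \ge 1$, including those divisible by $p$: the $p$-part of $i$ is trivial, so the local relation at $p$ is unaffected, and likewise $\lambda_f(j)\lambda_f(m) = \sum_{d_2 \mid (j,m)} \lambda_f(jm/d_2^2)$. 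The interchange of the finite sum over $f$ with the infinite sums over $l,m$, as well as the later reordering that moves $d_1,d_2$ to the outside, is justified by absolute convergence: Lemma~\ref{Gkbound} gives $G_k(lm/p) \ll e^{-c\sqrt{lm/p}}$, while Deligne's bound $|\lambda_f(n)| \ll_\varepsilon n^\varepsilon$ controls the eigenvalues, so every series in sight converges absolutely.

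The final step is the change of variables. For a fixed pair of divisors $d_1 \mid (i,l)$ and $d_2 \mid (j,m)$, I would set $i = I d_1$, $l = L d_1$, $j = J d_2$, $m = M d_2$, and then swap the order of summation so that $d_1, d_2$ are summed first. Under this substitution $\frac{1}{\sqrt{lm}} = \frac{1}{\sqrt{d_1 d_2}}\cdot\frac{1}{\sqrt{LM}}$, the argument of $G_k$ becomes $\frac{d_1 d_2 LM}{p}$, and $\lambda_f(il/d_1^2)\lambda_f(jm/d_2^2) = \lambda_f(IL)\lambda_f(JM)$. The constraint $1 \le i \le D$ together with $d_1 \mid i$ forces both $d_1 < D$ and $I d_1 < D$ (and similarly for $d_2, J$), which is exactly the range recorded in the statement, while $\alpha_i = \alpha_{I d_1}$ and $\bar\alpha_j = \bar\alpha_{J d_2}$ match the coefficients there. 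Collecting everything reproduces the displayed identity, and since each rearrangement preserves equality term by term, the existence of a non-trivial solution to the new equation is equivalent to the linear dependence we began with.

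I expect the main obstacle to be bookkeeping rather than anything conceptual: one must carefully transport the ranges of summation across the change of variables, in particular translating the single constraint $i \le D$ into the joint constraints on $d_1$ and $I$, and one must verify that the Hecke relation is legitimately applied at the prime $p$. The only genuinely analytic input is the absolute convergence needed to reorder the sums, and this has already been isolated in Lemma~\ref{Gkbound}, so the argument should go through cleanly once these routine checks are carried out.
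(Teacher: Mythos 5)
Your proposal is correct and follows essentially the same route as the paper: substitute the expression for $|L^{ar}(f,k)|^2$ from Lemma~\ref{Lfunctionatk} into \eqref{modsquareLfunction}, discard the nonzero constant $2/((k-1)!)^2$, apply the Hecke multiplicativity relation to $\lambda_f(i)\lambda_f(l)$ and $\lambda_f(j)\lambda_f(m)$, and perform the change of variables $i=Id_1$, $l=Ld_1$, $j=Jd_2$, $m=Md_2$. Your added justifications (absolute convergence via Lemma~\ref{Gkbound} for the reordering, and coprimality of $i,j\le D<p$ with the level to legitimize the Hecke relation, with the correct $\lambda_f(rs/d^2)$ normalization) are sound refinements of the same argument rather than a different approach.
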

\begin{proof}
We have already seen that the linear dependency of $T_1, T_2, \ldots, T_D$ on $z^{k-1} \otimes \lbrace0,  \infty\rbrace$ is equivalent to the existence of a non-trivial solution of the following equation 
    \begin{align*}
         0=\sum_{i=1}^D\sum_{j=1}^D \alpha _i \bar{\alpha} _j \sum_{f\in \mathcal{B}_{2k}(p)} \omega_f \lambda_f(i) \lambda_f(j)\big|L^{ar}(f, k) \big|^2.
    \end{align*}
Using the value of $\big|L^{ar}(f, k) \big|^2$ from Lemma \ref{Lfunctionatk}, we get
    \begin{align*}
        0=\sum_{i=1}^D\sum_{j=1}^D \alpha _i \bar{\alpha} _j \sum_{l,\, m \geq 1} G_k\left(\frac{lm}{p}\right)\frac{1}{\sqrt{lm}} \sum_{f\in \mathcal{B}_{2k}(p)} \omega_f \lambda_f(i) \lambda_f(j)\lambda_f(l) \lambda_f(m).
    \end{align*}
    Substitute $i=Id_1,\, j=Jd_2$ where $d_1=(i,l),\, d_2=(j,m)$ and we also substitute $Ld_1=l,\, Md_2=m$. Then we get
    \begin{align*}
        0=\sum_{d_1,d_2<D}\frac{1}{\sqrt{d_1d_2}} \sum_{Id_1, Jd_2<D}\alpha_{Id_1}\bar{\alpha}_{Jd_2}&\sum_{L, M\geq 1}\frac{G_k(LMd_1d_2\slash p)}{\sqrt{LM}}\sum_{f\in \mathcal{B}_{2k}(p)}\omega_f\lambda_f(Id_1)\lambda_f(Jd_2)\lambda_f(Ld_1)\lambda_f(Md_2). 
    \end{align*}
    We use the following multiplicative relation of the eigenvalues of Hecke operatos 
    \begin{align*}
         \lambda_f(r) \lambda_f(s)=\sum_{d|(r,s)}\lambda_f\left(\frac{rs}{d}\right),
    \end{align*}
    where $(r,s)$ denotes the gcd of $r$ and $s$, and it has no common factor with the level. Since in our case, we have the prime level $p$, we can avoid this restriction. Hence we have
    \begin{align}\label{prop}
        0=\sum_{d_1,d_2<D}\frac{1}{\sqrt{d_1d_2}} \sum_{Id_1, Jd_2<D}\alpha_{Id_1}\bar{\alpha}_{Jd_2}\sum_{L, M\geq 1}\frac{G_k(LMd_1d_2\slash p)}{\sqrt{LM}} \sum_{f\in \mathcal{B}_{2k}(p)}\omega_f\lambda_f(IL)\lambda_f(JM).
    \end{align}
    This completes the proof.
    \end{proof}

%%%%%%%%%%%%%%%%%%%%%%%%%%%%%%%%%%%%%%%%%%%%%%%%%%%%%%%%%%%%%%%%%%%%%%%%%%%%%%%%%%%%%%%%%%%%%%%%%%%%%%%%%%%%%%%%%%%%%%%%%%%%%%%%%%%%%%%

  Now we use Lemma \ref{petersson} in the equation \eqref{prop}. Then by $S_{\mathrm{main}}$ we denote the term involving the Kronecker delta function and by $S_{\mathrm{off}}$ we denote the remaining terms involving the Kloosterman sum, i.e.,
  \[
  S_{\mathrm{main}}:= \sum_{d_1,d_2<D}\frac{1}{\sqrt{d_1d_2}} \sum_{Id_1, Jd_2<D}\alpha_{Id_1}\bar{\alpha}_{Jd_2}\sum_{L, M\geq 1}\frac{G_k(LMd_1d_2\slash p)}{\sqrt{LM}}\,\ \delta_{IL\, JM}
  \]
  and 
  \begin{align*}
     S_{\mathrm{off}}:=\sum_{d_1,\,d_2<D}\frac{1}{\sqrt{d_1d_2}} &\sum_{Id_1, \,Jd_2<D}\alpha_{Id_1}\bar{\alpha}_{Jd_2}\sum_{L, M\geq 1}\frac{G_k(LMd_1d_2\slash p)}{\sqrt{LM}} \\
     &\times\left(2 \pi i^{2k} \sum_{p|c}\frac{\mathcal{S}(IL, JM, c )}{c} \,J_{2k-1}\left(\frac{4\pi \sqrt{ILJM}}{c}\right)\right).
\end{align*}
    %%%%%%%%%%%%%%%%%%%%%%%%%%%%%%%%%%%%%%%%%%%%%%%%%%%%%%%%%%%%%%%%%%%%%%%%%%%%%%%%%%%%%%%%%%%%%%%%%%%%%%%%%%%%%%%%%%%%%%%%%%%%
    \section{Lower bound for the main term}
   In this section, we establish a lower bound for $S_{\mathrm{main}}$. From the previous section, we get the following:
    \begin{align*}
        S_{\mathrm{main}}=\sum_{d_1,d_2<D}\frac{1}{\sqrt{d_1d_2}} \sum_{Id_1, Jd_2<D}\alpha_{Id_1}\bar{\alpha}_{Jd_2}&\sum_{IL=JM}\frac{G_k(LMd_1d_2\slash p)}{\sqrt{LM}}.
    \end{align*}
   By employing the same change of variables as VanderKam \cite[p. 352]{MR1760688}, we obtain
    \begin{align*}
        S_{\mathrm{main}}=\sum_{L=1}^{\infty}L\sum_{U,\, V <\frac{D}{L}} \tau (U)\tau(V)\,x_{UL}\bar{x}_{VL}\sum_{T|L}\frac{\mu(T)}{T}\sum_{A=1}^{\infty}\frac{G_k(A^2T^2UV\slash p)}{A},
    \end{align*}
    where $x_c=\alpha_c\slash \sqrt{c}$ and $\tau(u)$ is the divisor function, i.e., $\tau(u):= \displaystyle\sum_{d|u}^{}  1$.
    %%%%%%%%%%%%%%%%%%%%%%%%%%%%%%%%%%%%%%%%%%%%%%%%%%%%%%%%%%%%%%%%%%%%%%%%%%%%%%%%%%%%%%%%%%%%%%%%%%%%%%%%%%%%%%%%%%%%%%%%%%%%%%%%%%%%%%%%
    \begin{lem}\label{sumGk}
        With the above notations, we have
        \begin{align*}
            \sum_{A=1}^{\infty}\frac{G_k(A^2\slash X)}{A}= \frac{\left((k-1)!\right)^2}{2}\log X+c_0+ O\left(\frac{1}{X}\right),
        \end{align*}
        where $c_0$ is a fixed constant.
    \end{lem}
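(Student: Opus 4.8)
The plan is to feed the integral representation of $G_k$ into the sum, collapse the $A$-sum into a zeta value, and then read off the asymptotics by a contour shift and a residue computation.

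First I would substitute
\[
G_k\!\left(\frac{A^2}{X}\right)=\frac{1}{2\pi i}\int_{\Re(t)=3/4}\frac{\Gamma(k+t)^2}{(2\pi)^{2t}}\,A^{-2t}X^{t}\,\frac{dt}{t}
\]
(the integral representation of Lemma \ref{Lfunctionatk}, with $\alpha=k$) into $\sum_{A\ge 1}G_k(A^2/X)/A$. On the line $\Re(t)=3/4$ the Stirling bound used in Lemma \ref{Gkbound} gives exponential decay of $\Gamma(k+t)^2$ in $|\Im(t)|$, while $\sum_{A}A^{-1-2t}$ converges absolutely because $\Re(1+2t)=5/2>1$; hence I may interchange the sum and the integral. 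The inner sum becomes $\zeta(1+2t)$, so that
\[
\sum_{A=1}^{\infty}\frac{G_k(A^2/X)}{A}=\frac{1}{2\pi i}\int_{\Re(t)=3/4}\frac{\Gamma(k+t)^2}{(2\pi)^{2t}}\,\zeta(1+2t)\,X^{t}\,\frac{dt}{t}.
\]

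Next I would shift the contour from $\Re(t)=3/4$ to $\Re(t)=-1$. The only singularity crossed is at $t=0$: the factor $1/t$ has a simple pole, and because $\zeta(s)$ has its simple pole at $s=1$ we have $\zeta(1+2t)=\tfrac{1}{2t}+\gamma+O(t)$, so the integrand has a \emph{double} pole at the origin. Extracting $\Res_{t=0}$ is the heart of the matter: writing $\Gamma(k+t)^2=\Gamma(k)^2\bigl(1+2\psi(k)t+\cdots\bigr)$, $(2\pi)^{-2t}=1-2t\log(2\pi)+\cdots$ and $X^{t}=1+t\log X+\cdots$, and multiplying these against $\bigl(\tfrac{1}{2t}+\gamma\bigr)\cdot\tfrac1t$, the coefficient of $t^{-1}$ equals
\[
\Gamma(k)^2\Bigl(\tfrac12\log X+\gamma+\psi(k)-\log(2\pi)\Bigr).
\]
Since $\Gamma(k)^2=((k-1)!)^2$ this is precisely $\tfrac{((k-1)!)^2}{2}\log X+c_0$ with $c_0=((k-1)!)^2\bigl(\gamma+\psi(k)-\log(2\pi)\bigr)$, the claimed main term.

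Finally, for the error I would bound the shifted integral on $\Re(t)=-1$: pulling out $|X^{t}|=X^{-1}$, what remains is $\int_{\mathbb{R}}\bigl|\Gamma(k-1+iy)^2\zeta(-1+2iy)/(-1+iy)\bigr|\,dy$, which converges because the exponential decay of the Gamma factor dominates the polynomial growth of $\zeta$ on vertical lines; this yields the $O(1/X)$ term. The main obstacle is the double pole at $t=0$, where one must carry the linear term of every factor so that the constant in front of $\log X$ comes out correctly as $((k-1)!)^2/2$. A secondary technical point is the case $k=1$, where the line $\Re(t)=-1$ meets the double pole of $\Gamma(1+t)^2$; there I would instead shift to $\Re(t)=-1+\epsilon$, obtaining $O(X^{-1+\epsilon})$, which still suffices for the later estimates, and note that for $k\ge2$ the nearest Gamma pole lies at $t=-k\le-2$ and the trivial zeros of $\zeta$ at $t=-\tfrac32,-\tfrac52,\dots$ are harmless, so the clean $O(1/X)$ bound holds.
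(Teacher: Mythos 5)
Your proof is correct and follows essentially the same route as the paper: express the sum as a contour integral against $\zeta(1+2t)$, shift the contour to the left, and extract the main term from the double pole at $t=0$. The differences are cosmetic — you stop the shift at $\Re(t)=-1$ rather than $-k-\epsilon$ (so you never meet the pole of $\Gamma(k+t)^2$ at $t=-k$ for $k\ge 2$), you compute the residue by Laurent expansion rather than by the derivative formula, and you are in fact more careful than the paper about $k=1$, where the line $\Re(t)=-1$ hits that Gamma pole and the clean $O(1/X)$ must be weakened to $O(X^{-1+\epsilon})$ (which is all the later estimates need).
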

    \begin{proof}
    From the integral representation of the function $G_k(x)$, we have
    \begin{align*}
         \sum_{A=1}^{\infty}\frac{G_k(A^2\slash X)}{A}&=\frac{1}{2\pi i}\sum_{A=1}^{\infty} \int_{\mathrm{Re}(t)=\frac{3}{4}}\frac{\Gamma(k+t)^2}{(2\pi)^{2t}}\frac{X^t}{A^{2t+1}}\frac{dt}{t}\\
         &=\frac{1}{2\pi i}\int_{\mathrm{Re}(t)=\frac{3}{4}}\frac{\Gamma(k+t)^2}{(2\pi)^{2t}}X^t\, \zeta(2t+1)\frac{dt}{t}.
    \end{align*}
    Now by the contour shift, for any $\epsilon>0$ we have 
    \begin{align*}
        \frac{1}{2\pi i}\int_{\mathrm{Re}(t)=\frac{3}{4}}\frac{\Gamma(k+t)^2}{(2\pi)^{2t}}X^t\, \zeta(2t+1)\frac{dt}{t}=&\frac{1}{2\pi i}\int_{\mathrm{Re}(t)=-k-\epsilon}\frac{\Gamma(k+t)^2}{(2\pi)^{2t}}X^t\zeta(2t+1)\frac{dt}{t}\\ & + \mathrm{Res}_{t=-k}\left(\frac{\Gamma(k+t)^2}{(2\pi)^{2t}}\frac{X^t\zeta(2t+1)}{t}\right)\\ & +\mathrm{Res}_{t=0}\left(\frac{\Gamma(k+t)^2}{(2\pi)^{2t}}\frac{X^t\zeta(2t+1)}{t}\right).
    \end{align*}
   Using the residue formula, we compute
    \begin{align*}
        \mathrm{Res}_{t=0}\left(\frac{\Gamma(k+t)^2}{(2\pi)^{2t}}\frac{X^t\zeta(2t+1)}{t}\right)&= \lim_{t \to 0}\left(\frac{d}{dt}\left(\frac{t^2\Gamma(k+t)^2}{(2\pi)^{2t}}\frac{X^t\zeta(2t+1)}{t}\right)\right)\\
        &=\frac{\left((k-1)!\right)^2}{2}\log X+c_0;
    \end{align*}
    with a fixed $c_0$, and this completes the proof. 
    \end{proof}
    %-------------------------------------------------------------------------------------------------------------------
    \begin{prop}
   Let $\phi(L)$ be the Euler function and $y_L=\displaystyle\sum_{UL<D}\tau (U)\, x_{UL}$.
    For given $\delta>0$ there exists a constant $c_\delta$ such that 
        \begin{align*}
            S_{\mathrm{main}}\gg \log p \sum_{L\geq 1}\phi(L) |y_L|^2 \quad \text{for} \quad c_\delta D^2< p^{1-\delta}.
            \end{align*}
     \end{prop}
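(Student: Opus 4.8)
The plan is to substitute the asymptotic of Lemma~\ref{sumGk} into the inner sum over $A$ and then isolate the leading contribution using the elementary identity $\sum_{T\mid L}\mu(T)/T=\phi(L)/L$. Setting $X=p/(T^2UV)$, Lemma~\ref{sumGk} gives
\[
\sum_{A\ge 1}\frac{G_k(A^2T^2UV/p)}{A}=\frac{((k-1)!)^2}{2}\log\frac{p}{T^2UV}+c_0+O\!\left(\frac{T^2UV}{p}\right).
\]
Writing $\log(p/T^2UV)=\log p-2\log T-\log(UV)$ and summing the $T$-variable against $\mu(T)/T$ splits $S_{\mathrm{main}}$ into a main piece plus corrections. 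The main piece uses $\sum_{T\mid L}\mu(T)/T=\phi(L)/L$ together with the bookkeeping identity $\sum_{UL,VL<D}\tau(U)\tau(V)x_{UL}\bar{x}_{VL}=|y_L|^2$, and yields exactly
\[
\frac{((k-1)!)^2}{2}\,\log p\sum_{L\ge 1}\phi(L)|y_L|^2 ,
\]
which, since $k$ is fixed so that $((k-1)!)^2/2$ is a harmless positive constant, is precisely the asserted lower bound up to the implied constant. It then remains to show that every other term is either nonnegative or of strictly smaller order than $\log p\sum_L\phi(L)|y_L|^2$.

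Next I would dispatch the benign corrections. The $-2\log T$ piece contributes, through the computation $\sum_{T\mid L}\mu(T)\log T/T=-\tfrac{\phi(L)}{L}\sum_{q\mid L}\tfrac{\log q}{q-1}$, the quantity $((k-1)!)^2\sum_L\phi(L)\big(\sum_{q\mid L}\tfrac{\log q}{q-1}\big)|y_L|^2$, which is \emph{nonnegative} and, by Mertens, of size $O(\log\log L)$ per $L$; it is thus negligible against the $\log p$ main term and in fact only helps. The $c_0$ piece is $O\big(\sum_L\phi(L)|y_L|^2\big)=o\big(\log p\sum_L\phi(L)|y_L|^2\big)$. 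For the $O(T^2UV/p)$ error I would use that $T\mid L$ and $U,V<D/L$ force $T^2UV<D^2$, so each such term carries a factor $D^2/p<p^{-\delta}$; this is where the hypothesis $c_\delta D^2<p^{1-\delta}$ enters (and where Lemma~\ref{Gkbound} guarantees the $A$-sum is genuinely convergent). The surviving arithmetic sum grows only polynomially in $D$, so the $p^{-\delta}$ saving makes the error lower order, exactly as in VanderKam~\cite[p.~352]{MR1760688}.

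The main obstacle is the cross term from $-\log(UV)$, namely $-((k-1)!)^2\sum_L\phi(L)\,\Re\big(y_L^{(1)}\bar{y}_L\big)$ with $y_L^{(1)}:=\sum_{UL<D}\tau(U)\log U\,x_{UL}$. The difficulty is genuine: for a \emph{fixed} $L$ the form $\sum_{U,V}\tau(U)\tau(V)x_{UL}\bar{x}_{VL}\log(p/UV)$ is \emph{not} positive semidefinite, since $\log(p/UV)=\log p-\log U-\log V$ is a rank-$\le 3$ kernel whose cross term can overwhelm $\log p\,|y_L|^2$ for individual $L$, so no termwise estimate can succeed. The resolution is to keep the logarithm intact and exploit $c_\delta D^2<p^{1-\delta}$: because $T^2UV<D^2<p^{1-\delta}$ we have $\log(p/T^2UV)\ge \delta\log p>0$, so the kernel remains a fixed positive proportion of $\log p$. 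I would then change variables to $a=UL$, $b=VL$ and sum over $L\mid(a,b)$, rewriting $\sum_L\phi(L)\sum_{U,V}\tau(U)\tau(V)x_{UL}\bar{x}_{VL}\log(p/UV)$ as $\sum_{a,b<D}x_a\bar{x}_b\,K(a,b)$ with the explicit multiplicative kernel $K(a,b)=\sum_{L\mid(a,b)}\phi(L)\tau(a/L)\tau(b/L)\log(pL^2/ab)$, and show, following VanderKam's weight-$2$ argument, that the positive contribution $\delta\log p\sum_L\phi(L)|y_L|^2$ dominates globally: although individual $Q_L$ may be negative, any large $x_c$ is detected by the $y$-transform (for instance through the $U=1$ term of $y_c$), so the $\ell^2$-mass $\sum_L\phi(L)|y_L|^2$ controls the cross term after summation over $L$. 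Since the only weight-dependent input to the entire computation is the constant $((k-1)!)^2/2$ coming from Lemma~\ref{sumGk}, together with the decay in Lemma~\ref{Gkbound}, the estimate is structurally identical to the case $k=1$ and goes through uniformly in $k$.
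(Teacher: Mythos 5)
Your overall route is the same as the paper's: insert Lemma~\ref{sumGk} into the $A$-sum, use $\sum_{T\mid L}\mu(T)/T=\phi(L)/L$ to extract the leading piece $\tfrac{((k-1)!)^2}{2}\log p\sum_{L}\phi(L)|y_L|^2$, and reduce the remaining corrections to VanderKam's weight-$2$ computation on pp.~352--354 of \cite{MR1760688}. Your handling of the $-2\log T$ term (nonnegative, hence harmless), of the constant $c_0$, and of the $O(T^2UV/p)$ tail is correct, and is in fact more explicit than the paper, which simply cites VanderKam for this bookkeeping.

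The gap is at the one point where the real work happens, namely the cross term coming from $-\log(UV)$. You correctly diagnose that for fixed $L$ the kernel $\log p-\log U-\log V$ is not positive semidefinite, but the mechanism you then offer --- that $T^2UV<D^2<p^{1-\delta}$ forces $\log(p/T^2UV)\ge\delta\log p>0$, ``so the kernel remains a fixed positive proportion of $\log p$'' --- does not repair this: pointwise positivity of a kernel says nothing about positivity of the associated quadratic form, which is exactly the objection you yourself raised two sentences earlier, and the subsequent appeal to the $y$-transform ``detecting'' large $x_c$ is not an argument. The step that actually closes the proof (and which the paper signals by quoting the inequality $\sum_{w<D/L}\Lambda(w)/w<\log(D/L)+c_1$) is the convolution identity $\tau(U)\log U=2\sum_{w\mid U}\Lambda(w)\tau(U/w)$, which converts your $y_L^{(1)}$ into $2\sum_{w<D/L}\Lambda(w)\,y_{wL}$; one then absorbs each product $\phi(L)\,|y_{wL}|\,|y_L|$ into the quantities $\phi(L)|y_L|^2$ and $\phi(wL)|y_{wL}|^2$ by the arithmetic--geometric mean inequality and sums the weights $\Lambda(w)/w$ by Chebyshev--Mertens, so that the cross term is at most $(\log(D/L)+c_1)$ times $\sum_L\phi(L)|y_L|^2$ up to an absolute constant. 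Only at this stage does the hypothesis $c_\delta D^2<p^{1-\delta}$ enter, via $\tfrac{((k-1)!)^2}{2}\log p-\log D\ge\tfrac{\delta}{2}\log p-\tfrac12\log c_\delta$, and choosing $c_\delta$ suitably gives the stated bound. Without the $\Lambda$-convolution step your argument does not go through; once it is supplied, your proof coincides with the paper's.
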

    \begin{proof}
    Recall that
    \begin{align*}
        S_{\mathrm{main}}=\sum_{L=1}^{\infty}L\sum_{U,\, V <\frac{D}{L}} \tau (U)\tau(V)\,x_{UL}\bar{x}_{VL}\sum_{T|L}\frac{\mu(T)}{T}\sum_{A=1}^{\infty}\frac{G_k(A^2T^2UV\slash p)}{A},
    \end{align*}
        Then by using Lemma \ref{sumGk} and the computations from VanderKam \cite[pp. 353 -- 354]{MR1760688}, we get the following:
\begin{align*}
    S_{\mathrm{main}}\geq \sum_{L=1}^{\infty}\phi(L) |y_L|^2 \left(\log \left(\frac{p^{\frac{((k-1)!)^2}{2}}}{D}\right)+c_0-c_1\right),
\end{align*}
where $y_L=\displaystyle\sum_{UL<D}\tau (U)\, x_{UL}$. 
Recall  the function $\Lambda$ as in VanderKam \cite[p. 353]{MR1760688} and  $c_1$ is a fixed quantity obtained by prime number theorem :
\[
\sum\limits_{w < \frac{D}{L}} \frac{\Lambda(w)}{w} < \log\bigg(\frac{D}{L}\bigg) +c_1.
\]
For $D^2< c_{\delta} p^{1-\delta}$, we have 
\begin{align*}
    S_{\mathrm{main}}\geq \sum_{L=1}^{\infty}\phi(L) |y_L|^2 \left(\bigg(\frac{((k-1)!)^2}{2}-\frac{1}{2}+\frac{\delta}{2}\bigg)\log(p)+c_0-c_1-\frac{1}{2} \log (c_{\delta})\right). 
\end{align*}
 We then have
\begin{align*}
     S_{\mathrm{main}}\geq \sum_{L=1}^{\infty}\phi(L) |y_L|^2 \bigg({\frac{((k-1)!)^2}{2}}-\frac{1}{2}+\frac{\delta}{2}\bigg)\log p+\sum_{L=1}^{\infty}\phi(L) |y_L|^2\bigg(- \frac{1}{2} \log (c_{\delta})+(c_0-c_1)\bigg).
\end{align*}
Choose $c_{\delta}$ such that  $-\frac{1}{2} \log (c_{\delta}) +(c_0-c_1)>0$.
\begin{align*}
     S_{\mathrm{main}}\geq \sum_{L=1}^{\infty}\phi(L) |y_L|^2 \bigg({\frac{((k-1)!)^2}{2}}-\frac{1}{2}+\frac{\delta}{2}\bigg)\log p.
\end{align*}
Since $c_{\delta}$ is a fixed quantity, it is clear that the first term dominates the second term. 
Since $C_{k, \delta}:=\frac{((k-1)!)^2}{2}-\frac{1}{2}+\frac{\delta}{2}>1$, we get
\begin{align*}
     S_{\mathrm{main}}> C_{k, \delta} \log p\sum_{L=1}^{\infty}\phi(L) |y_L|^2. 
\end{align*}
We deduce that $S_{\mathrm{main}} \gg \log p(\sum\limits_{L=1}^{\infty}\phi(L) |y_L|^2)$;
where the implied constant depends on $k$ and $\delta$.
    \end{proof}

%%%%%%%%%%%%%%%%%%%%%%%%%%%%%%%%%%%%%%%%%%%%%%%%%%%%%%%%%%%%%%%%%%%%%%%%%%%%%%%%%%%%%%%%%%%%%%%%%%%%%%%%%%%%
\section{Upper bound of Kloosterman sum part}
Here we consider the part which involves the Kloosterman sums, i.e., $S_{\mathrm{off}}$. Recall that
\begin{align*}
     S_{\mathrm{off}}=\sum_{d_1,\,d_2<D}\frac{1}{\sqrt{d_1d_2}} &\sum_{Id_1, \,Jd_2<D}\alpha_{Id_1}\bar{\alpha}_{Jd_2}\sum_{L, M\geq 1}\frac{G_k(LMd_1d_2\slash p)}{\sqrt{LM}} \\
     &\times\left(2 \pi i^{2k} \sum_{p|c}\frac{\mathcal{S}(IL, JM, c )}{c} \,J_{2k-1}\left(\frac{4\pi \sqrt{ILJM}}{c}\right)\right).
\end{align*}
Making the substitution $x_{Id_1}= \frac{\alpha_{Id_1}}{\sqrt{Id_1}}$ and $\bar{x}_{Jd_2}= \frac{\alpha_{Jd_2}}{\sqrt{Jd_2}}$, we get
\begin{align*}
     S_{\mathrm{off}}=\sum_{d_1,\,d_2<D} \,\, \,\ &\sum_{Id_1, \,Jd_2<D}x_{Id_1}\bar{x}_{Jd_2}\sqrt{IJ}\sum_{L, M\geq 1}\frac{G_k(LMd_1d_2\slash p)}{\sqrt{LM}} \\
     &\times\left(2 \pi i^{2k} \sum_{p|c}\frac{\mathcal{S}(IL, JM, c )}{c} \,J_{2k-1}\left(\frac{4\pi \sqrt{ILJM}}{c}\right)\right).
\end{align*}
For notational convenience we write $S_{\mathrm{off}}= S_{\mathrm{off}}^{c<p^2}+S_{\mathrm{off}}^{c\geq p^2}$, where we define
\begin{align}\label{SoffclessthanNsquare}
S_{\mathrm{off}}^{c<p^2}:=\sum_{d_1,\,d_2<D} \,\, \,\ &\sum_{Id_1, \,Jd_2<D}x_{Id_1}\bar{x}_{Jd_2}\sqrt{IJ}\sum_{L, M\geq 1}\frac{G_k(LMd_1d_2\slash p)}{\sqrt{LM}} \notag\\
     &\times\left(2 \pi i^{2k} \sum_{\substack{p|c\\ c <p^2}}\frac{\mathcal{S}(IL, JM, c )}{c} \,J_{2k-1}\left(\frac{4\pi \sqrt{ILJM}}{c}\right)\right)
\end{align}
and
\begin{align}\label{SoffcgraterthanNsquare}
    S_{\mathrm{off}}^{c\geq p^2}:=\sum_{d_1,\,d_2<D} \,\, \,\ &\sum_{Id_1, \,Jd_2<D}x_{Id_1}\bar{x}_{Jd_2}\sqrt{IJ}\sum_{L, M\geq 1}\frac{G_k(LMd_1d_2\slash p)}{\sqrt{LM}} \notag\\
     &\times\left(2 \pi i^{2k} \sum_{\substack{p|c\\ c \geq p^2}}\frac{\mathcal{S}(IL, JM, c )}{c} \,J_{2k-1}\left(\frac{4\pi \sqrt{ILJM}}{c}\right)\right).
\end{align}
%---------------------------------------------------------------------------------------------------------------
\begin{lem}
For $D^2< c_{\delta}p^{1-\delta}$,  we have the following upper bound: 
    \begin{align*}
         S_{\mathrm{off}}^{c\geq p^2}\ll p^{-1+\epsilon} \sum_{L\geq 1}\phi(L) |y_L|^2
    \end{align*}
    for any given $\epsilon>0$. 
\end{lem}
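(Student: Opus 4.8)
The plan is to exploit the fact that the restriction $c\geq p^2$ forces the argument of the Bessel function to be uniformly small, so that $J_{2k-1}$ contributes a large negative power of $p$; combined with the rapid decay of $G_k$ from Lemma~\ref{Gkbound} (which truncates the $L,M$-summation) and a standard estimate for the Kloosterman sums, the whole quantity $S_{\mathrm{off}}^{c\geq p^2}$ will turn out to be negligible against the main term. Throughout, $\epsilon>0$ denotes an arbitrarily small constant whose value may change from line to line.

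First I would localise the $L,M$-summation. By Lemma~\ref{Gkbound} and the boundedness of $G_k$ on $(0,1)$, the weight $G_k(LMd_1d_2/p)$ decays exponentially once $LMd_1d_2\gg p$, so up to an error of size $O(p^{-A})$ the inner sums are supported on $LM\ll p^{1+\epsilon}/(d_1d_2)$, whence $L,M\ll p^{1+\epsilon}$. Since $Id_1,Jd_2<D$ forces $I,J<D$, and the hypothesis $D^2<c_\delta p^{1-\delta}$ gives $D\ll p^{(1-\delta)/2}$, the Bessel argument obeys
\[
\frac{4\pi\sqrt{ILJM}}{c}\ll \frac{D\,p^{1+\epsilon}}{p^{2}}=D\,p^{-1+\epsilon}\ll p^{-(1+\delta)/2+\epsilon}<1 .
\]
On this range I would invoke the small-argument estimate $J_{2k-1}(x)\ll x^{2k-1}\leq x$, valid precisely because $k\geq 1$ makes the order $2k-1\geq 1$ while $x\leq 1$; this is the one place where the weight hypothesis is genuinely used.

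Next I would dispatch the sum over the moduli $c$. Applying Weil's bound $|\mathcal S(r,s,c)|\ll_\epsilon (r,s,c)^{1/2}c^{1/2+\epsilon}$ and writing $c=pm$ with $m\geq p$, the tail collapses to $\sum_{p\mid c,\,c\geq p^2}c^{-3/2+\epsilon}\ll p^{-2+\epsilon}$, the greatest-common-divisor factor being absorbed into $p^{\epsilon}$ since $I,J<p$. Feeding this back, the product of the weights $\sqrt{IJ}$, $1/\sqrt{LM}$ and $\sqrt{ILJM}$ collapses neatly to $IJ$, and a routine estimate via the integral representation of $G_k$ (in the spirit of Lemma~\ref{sumGk}) gives $\sum_{L,M}G_k(LMd_1d_2/p)\ll (p/(d_1d_2))^{1+\epsilon}$. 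Hence for each fixed quadruple $(d_1,d_2,I,J)$ the inner block is bounded by $p^{-1+\epsilon}\,IJ\,(d_1d_2)^{-1}$.

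Finally I would reassemble the coefficient form. The delicate point is that bounding the $x$-coefficients by absolute values would only produce a square of a single sum, whereas the target $\sum_{L}\phi(L)|y_L|^2$ is a sum of squares; so instead I would run the \emph{same} change of variables $(d_1,d_2,I,J,L,M)\mapsto(U,V,L)$ that produced $y_L=\sum_{UL<D}\tau(U)\,x_{UL}$ in the main-term computation, so that after summation the coefficient data again assembles into the vectors $y_L$, and a Cauchy--Schwarz in the variable $L$ converts the bilinear form into $\sum_{L}\phi(L)|y_L|^2$ multiplied by the scalar gain $p^{-1+\epsilon}$, the residual powers of $D$ being absorbed through $D^2<c_\delta p^{1-\delta}$. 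This yields $S_{\mathrm{off}}^{c\geq p^2}\ll p^{-1+\epsilon}\sum_{L\geq 1}\phi(L)|y_L|^2$, as claimed. \emph{The main obstacle} I anticipate is exactly this last reassembly: one must pass from the crude term-by-term estimate back to the positive-definite quantity $\sum_{L}\phi(L)|y_L|^2$ without losing more than a factor $p^{\epsilon}$, which requires keeping the divisor structure of the coefficients intact, handling the greatest-common-divisor factor in Weil's bound with care, and using $D^2<c_\delta p^{1-\delta}$ to kill the surviving powers of $D$.
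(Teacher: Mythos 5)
Your proposal is correct and follows essentially the same route as the paper: Weil's bound for the Kloosterman sum, the linear bound $J_{2k-1}(x)\ll x$, the exponential decay of $G_k$ to truncate the $L,M$-sums to $LM\ll p^{1+\epsilon}/(d_1d_2)$, and VanderKam's change of variables to reassemble the coefficients into $\sum_{L}\phi(L)|y_L|^2$. The only cosmetic differences are that you justify the Bessel estimate via the small-argument asymptotic after localising (the paper just invokes $J_{2k-1}(x)\ll x$ directly), and that both you and the paper defer the final coefficient reassembly to the computation on p.~354 of VanderKam.
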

\begin{proof}
    We recall the Weil's bound for the Kloosterman sum, which says
    \begin{align}\label{weilbound}
        \mathcal{S}(IL, JM, c ) \leq (IL, JM, c)^{\frac{1}{2}}\,\tau(c)\, c^{\frac{1}{2}}.
    \end{align}
    Also, recall that the $J$-Bessel function satisfies the bound
    \begin{align}\label{Jbound}
        J_{2k-1}\left(\frac{4\pi \sqrt{ILJM}}{c}\right)\ll \left(\frac{4\pi \sqrt{IJLM}}{c}\right).
    \end{align}
Using \eqref{weilbound} and \eqref{Jbound} in the expression \eqref{SoffcgraterthanNsquare}, we get
\begin{align*}
    S_{\mathrm{off}}^{c\geq p^2}\ll \sum_{d_1,\,d_2<D} \,\, \, \sum_{Id_1, \,Jd_2<D}|x_{Id_1}|\,|\bar{x}_{Jd_2}|\,IJ\sum_{L, M\geq 1}{G_k(LMd_1d_2\slash p)} 
      \sum_{\substack{p|c\\ c \geq p^2}}c^{-\frac{3}{2}}\, (IL, JM, c)^{\frac{1}{2}}\,\tau(c).
\end{align*}
Now using the fact that $G_k(x)\ll e^{-c\sqrt{x}}$ (see Lemma \ref{Gkbound}), we have
\begin{align*}
S_{\mathrm{off}}^{c\geq p^2}\ll p^{-1+\epsilon_1}\sum_{d_1,\,d_2<D} \,\, \, \sum_{Id_1, \,Jd_2<D}|x_{Id_1}|\,|\bar{x}_{Jd_2}|\,IJ
\end{align*}
for any given $\epsilon_1>0$. Then using the computation from VanderKam \cite[p. 354]{MR1760688} we get
\begin{align*}
    S_{\mathrm{off}}^{c\geq p^2}\ll p^{-1+\epsilon_1}D^{\epsilon_2} \sum_{L\geq 1}\phi(L) |y_L|^2
\end{align*}
for any given $\epsilon_2>0$. Now, for $D^2<c_{delta} p^{1-\delta}$ we get
\begin{align*}
    S_{\mathrm{off}}^{c\geq p^2}\ll p^{-1+\epsilon} \sum_{L\geq 1}\phi(L) |y_L|^2
\end{align*}
for some $\epsilon>0$, and this completes the proof.
\end{proof}

\begin{lem}
    With the above notation, for $D^2< c_{\delta} p^{1-\delta}$ we have
    \begin{align*}
        S_{\mathrm{off}}^{c<p^2}\ll p^{\epsilon}\sum_{L\geq 1}\phi(L) |y_L|^2
    \end{align*}
    for any given $\epsilon>0$.
\end{lem}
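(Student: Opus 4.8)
The plan is to bound $S_{\mathrm{off}}^{c<p^2}$ by the same mechanism used for $S_{\mathrm{off}}^{c\geq p^2}$ in the previous lemma, namely Weil's bound \eqref{weilbound} for the Kloosterman sums together with the small-argument estimate \eqref{Jbound} for $J_{2k-1}$, but tracking the divisor and greatest-common-divisor factors more carefully because here $c$ runs only over the finitely many multiples $c=pm$, $1\le m<p$, of $p$ below $p^2$. The first observation I would record is that in the range $D^2<c_\delta p^{1-\delta}$ the argument of the Bessel function is genuinely small: the rapid decay of $G_k$ from Lemma \ref{Gkbound} confines the $L,M$-sum to $LMd_1d_2\ll p\log^2 p$, while $Id_1,Jd_2<D$, so that $\sqrt{ILJM}/c\ll D/(\sqrt{p}\,d_1d_2)<1$. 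Hence \eqref{Jbound} is the correct (and essentially sharp) estimate in this regime, and no large-argument decay of $J_{2k-1}$ is needed.

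Substituting \eqref{weilbound} and \eqref{Jbound} into \eqref{SoffclessthanNsquare} and using the identity $\sqrt{IJ}\cdot\sqrt{ILJM}/\sqrt{LM}=IJ$ to collapse the weight, I obtain
\[
|S_{\mathrm{off}}^{c<p^2}|\ll \sum_{d_1,d_2<D}\sum_{Id_1,Jd_2<D}|x_{Id_1}|\,|x_{Jd_2}|\,IJ\sum_{L,M\geq1}G_k\!\Big(\tfrac{LMd_1d_2}{p}\Big)\sum_{\substack{p\mid c\\ c<p^2}}\frac{(IL,JM,c)^{1/2}\tau(c)}{c^{3/2}}.
\]
The next step is to execute the two innermost sums. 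Writing $c=pm$ with $1\le m<p$, the factor $c^{-3/2}$ contributes $p^{-3/2}m^{-3/2}$ and $\tau(c)\ll p^{\epsilon}$, so the bare $c$-sum is $\ll p^{-3/2+\epsilon}$; the gcd $(IL,JM,c)^{1/2}$ equals $1$ for all but a sparse set of $(L,M)$, and grouping terms by the value $g=(IL,JM,c)$ and using that $g\mid c$ imposes congruence conditions on $L$ and $M$ shows that, after averaging against $G_k$, the gcd contributes at most a further $p^{\epsilon}$. Combined with $\sum_{L,M}G_k(LMd_1d_2/p)\ll (p/d_1d_2)\,p^{\epsilon}$ (again from Lemma \ref{Gkbound} and the divisor bound), the two inner sums are $\ll p^{-1/2+\epsilon}/(d_1d_2)$.

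It then remains to reassemble the outer bilinear form. Recalling $x_c=\alpha_c/\sqrt{c}$ and $y_L=\sum_{UL<D}\tau(U)x_{UL}$, the expression $\sum_{d_1,d_2<D}\frac{1}{d_1d_2}\big(\sum_{Id_1<D}I|x_{Id_1}|\big)\big(\sum_{Jd_2<D}J|x_{Jd_2}|\big)$ is treated exactly as the corresponding form on \cite[p.~354]{MR1760688}: an application of Cauchy--Schwarz together with the definition of $y_L$ bounds it by $D^{1+\epsilon}\sum_{L\geq1}\phi(L)|y_L|^2$. This gives $|S_{\mathrm{off}}^{c<p^2}|\ll p^{-1/2+\epsilon}D^{1+\epsilon}\sum_{L\geq1}\phi(L)|y_L|^2$, and since $D^2<c_\delta p^{1-\delta}$ forces $p^{-1/2}D\ll 1$, this collapses to the claimed $S_{\mathrm{off}}^{c<p^2}\ll p^{\epsilon}\sum_{L\geq1}\phi(L)|y_L|^2$. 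I expect the genuine obstacle to be the smallest modulus $c=p$ (the term $m=1$), where the saving $c^{-3/2}$ is weakest and the whole estimate is tightest; it is precisely there that the gcd/divisor bookkeeping of the middle paragraph must be carried out uniformly, and the hypothesis $D^2<c_\delta p^{1-\delta}$ is what keeps the Bessel argument below $1$ and the bilinear form under control. The remaining manipulations parallel VanderKam's weight-two analysis and are routine.
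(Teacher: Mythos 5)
Your overall strategy --- Weil's bound together with the small-argument Bessel estimate, i.e.\ putting absolute values on every term of $S_{\mathrm{off}}^{c<p^2}$ --- is not the paper's route, and in the range $c<p^2$ it provably cannot close. The paper instead opens the Kloosterman sum as $\sum_{(a,c)=1}e((aIL+\bar aJM)/c)$, expands $J_{2k-1}$ as a power series, and exploits \emph{cancellation} in the resulting linear exponential sums $\sum_L L^{k+\ell-1-t}e(aIL/c)$ over dyadic blocks, via the estimate $\min\big(\mathscr{L},\,c^{\epsilon}(1+|k+\ell-1-t|)/\|aI/c\|\big)$ and its average over $a\bmod c$ (VanderKam's Lemmas 3.3 and 3.4); this is precisely what replaces the lossy factor $\phi(c)\,\mathscr{LM}$ by $(\mathscr{LM}+c)c^{\epsilon}$, and it is unavailable once absolute values have been taken. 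Quantitatively: for a single modulus $c$ your treatment yields a contribution of order $\big(\sum_{n<D}n|x_n|\big)^2\,p^{1+\epsilon}c^{-3/2}$ after executing the $L,M$-sum against $G_k$, so the worst modulus $c=p$ alone gives $\big(\sum_{n<D}n|x_n|\big)^2\,p^{-1/2+\epsilon}$. Now take the test vector $\alpha_q=1$ for primes $q\in(D/2,D)$ and $\alpha_n=0$ otherwise: then $\sum_n n|x_n|=\sum_q\sqrt q\asymp D^{3/2}/\log D$ while $\sum_L\phi(L)|y_L|^2\asymp D/\log D$, so your outer bilinear form is of size $D^{2}(\log D)^{-1}\sum_L\phi(L)|y_L|^2$ --- not the $D^{1+\epsilon}\sum_L\phi(L)|y_L|^2$ you assert; Cauchy--Schwarz costs a full $D^{2}$ here and this example shows that is sharp. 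Feeding the correct $D^{2+\epsilon}$ back in gives $S_{\mathrm{off}}^{c<p^2}\ll p^{-1/2+\epsilon}D^{2+\epsilon}\sum_L\phi(L)|y_L|^2\ll p^{1/2-\delta+\epsilon}\sum_L\phi(L)|y_L|^2$ under $D^2<c_\delta p^{1-\delta}$, which exceeds the main term $\asymp\log p\sum_L\phi(L)|y_L|^2$ by essentially $\sqrt p$.

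So the gap is twofold: the erroneous $D^{1+\epsilon}$ bound on the outer bilinear form, and, more fundamentally, the absence of any mechanism for cancellation in the $(a,L,M)$-sums, without which the moduli $c$ near $p$ are genuinely too large. Your instinct that $c=p$ is the critical case is correct, but the gcd/divisor bookkeeping you propose there cannot supply the missing $\sqrt p$: the Weil bound $|\mathcal{S}(IL,JM,p)|\le 2\sqrt p$ is sharp for almost all pairs, and $J_{2k-1}(x)\asymp x^{2k-1}$ for small $x$ is sharp as well, so no refinement of absolute-value estimates will do. To repair the argument you must follow the paper (equivalently VanderKam \cite[\S 3]{MR1760688}): restrict $L,M$ to dyadic blocks with $\mathscr{LM}\ll p^{1+\epsilon}/(d_1d_2)$ using Lemma \ref{Gkbound}, write $G_k$ via its contour integral, expand the Bessel function, and estimate the complete sum over $a\bmod c$ of the two geometric-type sums in $L$ and $M$; that is where the decisive factor $(\mathscr{LM}+c)c^{\epsilon}$ comes from.
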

\begin{proof}
    Without loss of generality, we assume that the sums on $L$ and $M$ are over dyadic blocks of size $\mathscr{L}$, $\mathscr{M}$, respectively with $\mathscr{LM}\ll \frac{p^{1+\epsilon}}{d_1d_2}$. 

From the definition of Kloosterman sum, we recall that (see \cite[p. 55]{MR1474964})
\begin{align}\label{eq1}
\mathcal{S}(IL, JM, c )= \sum_{(a,c)=1}e\left(\frac{aIL+\bar{a}JM}{c}\right),
\end{align}
where $e(x):= e^{2\pi i x}$ and $a\bar{a}\equiv 1\,(\mathrm{mod}\, c)$. The $J$-Bessel function can be written as (see \cite[p. 50]{MR1474964}) 
\begin{align}\label{eq2}
    J_{2k-1}\left(\frac{4\pi \sqrt{ILJM}}{c}\right)=\sum_{\ell=0}^{\infty}\frac{(-1)^\ell}{\ell! \,\ \Gamma(\ell+2k)}\left( \frac{2\pi \sqrt{IJLM}}{c}\right)^{2k+2\ell-1}.
\end{align}
Now, using \eqref{eq1} and \eqref{eq2} in the expression \eqref{SoffclessthanNsquare}, we get
\begin{align*}
    S_{\mathrm{off}}^{c<p^2}= &\int_{\mathrm{Re}(t)=\frac{3}{4}}\frac{i^{2k-1}\Gamma(k+t)^2}{t}\sum_{\ell=0}^{\infty} a_{\ell}\, p^t\sum_{d_1,\,d_2<D} \,\, \,\ \sum_{Id_1, \,Jd_2<D}x_{Id_1}\bar{x}_{Jd_2}(IJ)^{k+\ell}\\
    &\times(d_1d_2)^{-t}\sum_{\substack{p|c\\ c <p^2}}\frac{1}{c^{2k+2\ell}}\sum_{(a,c)=1}\,\ \sum_{L,\,M}(LM)^{k+\ell-1-t}\,\ e\left(\frac{aIL+\bar{a}JM}{c}\right) dt,
\end{align*}
where $a_\ell=\frac{(-1)^\ell}{\ell! \,\ \Gamma(\ell+2k)}$. Then by using VanderKam \cite[Lemma 3.3 and Lemma 3.4]{MR1760688}, we can write the inner sum as
\begin{align*}
    &\sum_{(a,c)=1}\,\ \sum_{L,\,M}(LM)^{k+\ell-1-t}\,\ e\left(\frac{aIL+\bar{a}JM}{c}\right)\\
    &=\sum_{(a,c)=1}\,\ \sum_{L=\mathscr{L}}^{2\mathscr{L}}L^{k+\ell-1-t}\,\ e\left(\frac{aIL}{c}\right)\sum_{M=\mathscr{M}}^{2\mathscr{M}}M^{k+\ell-1-t}\,\ e\left(\frac{\bar{a}JM}{c}\right)\\
    &\ll (\mathscr{LM})^{k+\ell-1-\mathrm{Re}(t)}\sum_{(a,c)=1} \mathrm{min}\left( \mathscr{L}, \frac{c^{\epsilon} (1+ |k+\ell -1-t|)}{\lfloor aI/c \rfloor}\right)\mathrm{min}\left( \mathscr{M}, \frac{c^{\epsilon} (1+ |k+\ell -1-t|)}{\lfloor \bar{a}J/c \rfloor}\right)\\
    &\ll (\mathscr{LM})^{k+\ell-1-\mathrm{Re}(t)} \left(1+ |k+\ell -1-t|\right)^2(\mathscr{LM}+c)c^\epsilon\\
    &\ll \left(\frac{p^{1+\epsilon}}{d_1d_2}\right)^{k+\ell -1 -\mathrm{Re}(t) } \left(\frac{p^{1+\epsilon}}{d_1d_2}+c\right) c^\epsilon  \left(1+ |k+\ell -1-t|\right)^2
\end{align*}
for any $\epsilon>0$. In the third line $\lfloor aI/ c \rfloor$ denotes the distance from $aI/c$ to the nearest integer.
Hence we get
\begin{align}\label{boundclessNsquare}
   S_{\mathrm{off}}^{c<p^2}& \ll \int_{\mathrm{Re}(t)=\frac{3}{4}}\frac{i^{2k-1}\Gamma(k+t)^2}{t}\sum_{\ell=0}^{\infty} a_{\ell}\, p^t\sum_{d_1,\,d_2<D} \,\, \,\ \sum_{Id_1, \,Jd_2<D}x_{Id_1}\bar{x}_{Jd_2}(IJ)^{k+\ell}\notag \\
    &\times(d_1d_2)^{-t}\sum_{\substack{p|c\\ c <p^2}}\frac{1}{c^{2k+2\ell}} \left(\frac{p^{1+\epsilon}}{d_1d_2}\right)^{k+\ell -1 -\mathrm{Re}(t) } \left(\frac{p^{1+\epsilon}}{d_1d_2}+c\right) c^\epsilon  \left(1+ |k+\ell -1-t|\right)^2 dt.
\end{align}
For each $\ell$ and $t$ the right hand side of \eqref{boundclessNsquare} is bounded by $\Gamma(k+t)^2\left(1+ |k+\ell -1-t|\right)^2$ times the term
\begin{align}\label{boundforeachlandt}
    p^{-k-\ell +\epsilon} D^\epsilon\left( \sum_{L<D}|y_L|L^{\ell+k}\right)^2&\ll  p^{-k-\ell +\epsilon} D^\epsilon\left( \sum_{L<D}\phi(L)|y_L|^2\right) \left(\sum_{L<D}\frac{L^{2\ell+2k}}{\phi(L)}\right)\notag\\&\ll
    (pD)^\epsilon\left(\frac{D^2}{p}\right)^{k+\ell} \left( \sum_{L<D}\phi(L)|y_L|^2\right).
\end{align}
For $D^2<c_{\delta} p^{1-\delta}$, the right hand side of \eqref{boundforeachlandt} is bounded by $p^{\frac{3\epsilon}{2}}p^{-\delta(\ell+k +\frac{\epsilon}{2})}\left( \sum_{L<D}\phi(L)|y_L|^2\right)$, and by taking the sum over $\ell$ and integration over $\mathrm{Re}(t)=3/4$, we see that the term $S_{\mathrm{off}}^{c<p^2}$ is bounded by $p^{\epsilon}\displaystyle \sum_{L}\phi(L) |y_L|^2$ for any $\epsilon>0$.
\end{proof}
%---------------------------------------------------------------------------------------------------------------------
\section{Generalization of VanderKam's theorem for higher weights}
\begin{thm}\label{mainthm1}
Let $k\geq 1$ be a fixed positive integer. For any integer $D  \geq 4$. For given $\delta>0$ there exists a constant $c_\delta$ such that the  Hecke operators $T_1, T_2, \ldots, T_D$ act linearly independently on the cycle $z^{k-1}\otimes \mathbf{e}$ within the space of cuspidal symbols $\mathbb{S}_{2k}(\Gamma_0(p)) \otimes \mathbb{R}$ for all primes $D^2<c_{\delta} p^{1-\delta}$.
\end{thm}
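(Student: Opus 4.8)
The plan is a contradiction argument that plays the positive diagonal mass $S_{\mathrm{main}}$ off against the Kloosterman error $S_{\mathrm{off}}$. Suppose the operators $T_1,\dots,T_D$ were linearly dependent on $z^{k-1}\otimes\mathbf{e}$. By Proposition~\ref{proposition} this is equivalent to the existence of a tuple $(\alpha_1,\dots,\alpha_D)$, not all zero, satisfying \eqref{prop}. First I would substitute the Petersson formula (Lemma~\ref{petersson}) into \eqref{prop}, splitting the vanishing sum as $S_{\mathrm{main}}+S_{\mathrm{off}}=0$, where $S_{\mathrm{main}}$ collects the contribution of the Kronecker delta and $S_{\mathrm{off}}$ that of the Kloosterman sums. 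Writing $x_c=\alpha_c/\sqrt{c}$ and $y_L=\sum_{UL<D}\tau(U)\,x_{UL}$, the goal is reduced to showing that $\sum_{L\geq1}\phi(L)\,|y_L|^2=0$.

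Next I would invoke the two size estimates proved in Sections~4 and~5. The lower bound for the main term gives, in the range $D^2<c_\delta p^{1-\delta}$, that $S_{\mathrm{main}}>C_{k,\delta}\,\log p\,\sum_{L\geq1}\phi(L)\,|y_L|^2$ with $C_{k,\delta}=\tfrac{((k-1)!)^2}{2}-\tfrac12+\tfrac{\delta}{2}>1$. The two upper bounds for $S_{\mathrm{off}}=S_{\mathrm{off}}^{c<p^2}+S_{\mathrm{off}}^{c\geq p^2}$ give $|S_{\mathrm{off}}|\ll p^{-\eta}\sum_{L\geq1}\phi(L)\,|y_L|^2$ for some $\eta>0$: the tail $c\geq p^2$ contributes $p^{-1+\epsilon}$ via Weil's bound, while on $c<p^2$ the factor $p^{-\delta(\ell+k)}$, summed over the Bessel expansion in $\ell$, produces a strictly negative power of $p$ once $\epsilon$ is taken small against $\delta$. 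Since $\log p$ grows without bound while the error decays like a negative power of $p$, in the range $D^2<c_\delta p^{1-\delta}$ (which forces $p$ beyond a fixed threshold because $D\geq4$) the main term strictly exceeds $|S_{\mathrm{off}}|$ whenever $\sum_L\phi(L)|y_L|^2>0$. As $S_{\mathrm{main}}+S_{\mathrm{off}}=0$ with $S_{\mathrm{main}}\geq0$, this forces $\sum_{L\geq1}\phi(L)\,|y_L|^2=0$, hence $y_L=0$ for every $L$.

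Finally I would invert the triangular relation $y_L=\sum_{UL<D}\tau(U)\,x_{UL}$. For $D/2<L<D$ only the term $U=1$ survives, so $x_L=y_L=0$; since every index $UL$ with $U\geq2$ exceeds $L$, descending induction on $L$ then forces $x_c=0$ for all $c<D$, and therefore $\alpha_i=0$ for all $i$. This contradicts the nontriviality of $(\alpha_i)$ and establishes the asserted linear independence on $z^{k-1}\otimes\mathbf{e}$ inside $\mathbb{S}_{2k}(\Gamma_0(p))\otimes\mathbb{R}$.

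The hard part is entirely analytic and is already isolated in Sections~4 and~5: securing the lower bound with a constant $C_{k,\delta}>1$ and controlling the Kloosterman sums uniformly. For the theorem itself the only remaining subtlety is to check that the two decay exponents line up so that $S_{\mathrm{main}}$ dominates $|S_{\mathrm{off}}|$ in a range $D^2<c_\delta p^{1-\delta}$ that does \emph{not} depend on $k$. This weight-independence is precisely what the shape of $C_{k,\delta}$ delivers, since $((k-1)!)^2\geq1$ only strengthens the main-term constant as $k$ grows, whereas the error estimates degrade by at worst a fixed power of $p$.
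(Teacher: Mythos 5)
Your proposal is correct and follows essentially the same route as the paper: substitute the Petersson formula into the dependence relation of Proposition~\ref{proposition}, then play the lower bound $S_{\mathrm{main}}\gg \log p\sum_L\phi(L)|y_L|^2$ against the upper bounds on $S_{\mathrm{off}}^{c\geq p^2}$ and $S_{\mathrm{off}}^{c<p^2}$ to rule out a nontrivial solution when $D^2<c_\delta p^{1-\delta}$. In fact you supply two details the paper leaves implicit --- that the $c<p^2$ estimate must actually yield a power saving (via the $p^{-\delta(\ell+k)}$ factor with $\epsilon$ small against $\delta$) rather than the stated $p^\epsilon$, and the final triangular inversion of $y_L=\sum_{UL<D}\tau(U)x_{UL}$ to recover $\alpha_i=0$ --- so your write-up is, if anything, more complete than the paper's.
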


\begin{proof}
The linear dependency of the Hecke operators $T_1, T_2, \ldots, T_D$ is equivalent to saying there exists a non-trivial solution $\alpha_1, \ldots, \alpha_D$ of  
  \begin{align}\label{eq}
         \sum_{i=1}^D\sum_{j=1}^D \alpha _i \bar{\alpha} _j \sum_{f\in \mathcal{B}_{2k}(p)} \omega_f \lambda_f(i) \lambda_f(j)\big|L^{ar}(f, k) \big|^2=0.
    \end{align}
This means that the Hecke operators $T_1, T_2, \ldots, T_D$ act linearly independently on $z^{k-1} \otimes\lbrace0,  \infty\rbrace$ if and only if there does not exist any non-trivial solution of the equation \eqref{eq}. With our notations the we have
\begin{align*}
   \sum_{i=1}^D\sum_{j=1}^D \alpha _i \bar{\alpha} _j \sum_{f\in \mathcal{B}_{2k}(p)} \omega_f \lambda_f(i) \lambda_f(j)\big|L^{ar}(f, k) \big|^2 &= \sum_{i=1}^D\sum_{j=1}^D \alpha _i \bar{\alpha} _j a_{ij}\\
   &=Q(\alpha_i,\bar{\alpha_j})\\
   &=S_{\mathrm{off}}+S_{\mathrm{main}},
\end{align*}
where $a_{ij}:=\sum\limits_{f\in \mathcal{B}_{2k}(p)}^h \lambda_f(i) \lambda_f(j)\big|L^{ar}(f, k) \big|^2$ with $\sum\limits^h$ denoting the harmonic average.
We form the quadratic form $Q(\underline{x},\underline{y})=\sum_{i=1}^D\sum_{j=1}^D x _i x _j a_{ij} $with $\underline{x}=
(x_i)_{1 \leq i \leq D}$ and $\underline{y}=
(y_i)_{1 \leq i \leq D}$.

 Now, note that the off-diagonal term $S_{\mathrm{off}}$ is not large enough to cancel the main diagonal term $S_{\mathrm{main}}$. This implies, for $c_\delta D^2<p^{1-\delta}$ there is no non-trivial solutions $\alpha_1, \ldots, \alpha_D$ of the equation \eqref{eq}. Hence, the Hecke operators $T_1, T_2, \ldots, T_D$ are linearly independent on the cycle $z^{k-1} \otimes\lbrace0, \infty\rbrace$. 
\end{proof}
%%%%%%%%%%%%%%%%%%%%%%%%%%%%%%%%%%%%%%%%%%%%%%%%%%%%%%%%%%%%%%%%%%%%%%%%%%%%%%%%%%%%%%%%%%%%%%%%%%%%%%%%%%%%%%%%%%%%
\section{Main Theorem}
  We know that the Galois group $\mathrm{Gal}(\overline{\mathbb{Q}}/\mathbb{Q})$ acts on $\mathcal{B}_{2k}(p)$. For $f \in \mathcal{B}_{2k}(p)$,  let $[f]$ be the set of all orbits of $f$. 
    Let $K_{[f]}$ be the Hecke field associated to an eigenform $f \in \mathcal{B}_{2k}(p)$. Let $\mathrm{Ann}_{[f]}$ be the annihilator ideal of $f$ in $\mathbb{T}$. Let $\mathcal{E}_{k-1}$ be the set of orbits such that $L^{ar}(f,k)\neq 0$.
We start by showing the following lemma:
\begin{lem}\label{identity}
Let $I_e:=\mathrm{Ann}(z^{k-1} \otimes \mathbf{e}):= \lbrace T\in \mathbb{T}\mid T(z^{k-1} \otimes \mathbf{e})=0 \rbrace
    $ be the annihilator ideal of $z^{k-1} \otimes \mathbf{e}$. Then we have
    \[
        \displaystyle\bigcap_{[f]\in \mathcal{E}_{k-1}}\mathrm{Ann}({[f]})=\displaystyle\mathrm{Ann}\left(z^{k-1} \otimes \mathbf{e}\right).
  \]
    \end{lem}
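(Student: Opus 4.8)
The plan is to prove both inclusions by diagonalizing the Hecke action on the space of cuspidal modular symbols and reading off the annihilator of the winding element one eigenform at a time. First I would complexify, setting $V := \mathbb{S}_{2k}(\Gamma_0(p)) \otimes \mathbb{C}$, and invoke the Eichler--Shimura identification $V \cong S_{2k}(\Gamma_0(p)) \oplus \overline{S_{2k}(\Gamma_0(p))}$ together with the Hecke eigenbasis $\mathcal{B}_{2k}(p)$. Since the $T_n$ with $(n,p)=1$ are self-adjoint for the Petersson pairing (and the remaining operators act semisimply on the at-most-two-dimensional isotypic blocks), $\mathbb{T}\otimes\mathbb{C}$ acts semisimply on $V$, so I may write $V = \bigoplus_{f\in\mathcal{B}_{2k}(p)} V_f$, where $V_f$ is the $f$-isotypic subspace and every $T\in\mathbb{T}$ acts on $V_f$ by the scalar $\lambda_f(T)$. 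I then expand the winding element as $z^{k-1}\otimes\mathbf{e} = \sum_f w_f$ with $w_f\in V_f$.

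The crux is to decide exactly which components $w_f$ are nonzero. Because the integration pairing between modular symbols and cusp forms is perfect and Hecke-equivariant, $w_f\neq 0$ if and only if $z^{k-1}\otimes\mathbf{e}$ pairs nontrivially against the $f$-isotypic part, i.e. $\langle z^{k-1}\otimes\{0,\infty\}, f\rangle \neq 0$. By \eqref{lfun} with $m=k-1$, this pairing equals $\frac{(k-1)!\,i^{k}}{(2\pi)^{k}}\,L^{ar}(f,k)$, so $w_f\neq 0$ precisely when the central value $L^{ar}(f,k)$ is nonzero. Since $\mathbf{e}$ is a real class, its pairings against $f$ and $\overline{f}$ vanish together, so this single condition governs the whole block $V_f$; and as the nonvanishing of the central critical value is constant along a Galois orbit, this reads $w_f\neq 0 \iff [f]\in\mathcal{E}_{k-1}$.

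With this in hand the two inclusions are immediate. For the inclusion $\mathrm{Ann}(z^{k-1}\otimes\mathbf{e}) \subseteq \bigcap_{[f]\in\mathcal{E}_{k-1}}\mathrm{Ann}([f])$, take $T\in I_e$, so that $0 = T(z^{k-1}\otimes\mathbf{e}) = \sum_f \lambda_f(T)\,w_f$; by directness of the decomposition $\lambda_f(T)\,w_f = 0$ for every $f$, and whenever $[f]\in\mathcal{E}_{k-1}$ we have $w_f\neq 0$, forcing $\lambda_f(T)=0$, i.e. $T\in\mathrm{Ann}([f])$. Conversely, if $T\in\bigcap_{[f]\in\mathcal{E}_{k-1}}\mathrm{Ann}([f])$ then $\lambda_f(T)=0$ on every orbit in $\mathcal{E}_{k-1}$, while $w_f=0$ for the remaining orbits, so $T(z^{k-1}\otimes\mathbf{e}) = \sum_f \lambda_f(T)\,w_f = 0$ and $T\in I_e$.

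I expect the only genuine obstacle to be justifying the equivalence $w_f\neq 0 \iff L^{ar}(f,k)\neq 0$: one must argue that the eigencomponent of the winding element in $V_f$ is faithfully detected by the single pairing against $f$ and its conjugate, which rests on the perfectness of the Eichler--Shimura pairing restricted to each isotypic block together with multiplicity one, and that the vanishing is a Galois-invariant condition so that it is well defined on the orbits $[f]$. Everything past that point is the formal manipulation of a direct sum decomposition, as carried out above.
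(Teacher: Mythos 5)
Your proof is correct and rests on the same two ingredients as the paper's own argument: the identity \eqref{lfun} relating $\big\langle z^{k-1}\otimes\lbrace 0,\infty\rbrace, f\big\rangle$ to the central value $L^{ar}(f,k)$, and the self-adjointness of the Hecke operators with respect to the nondegenerate integration pairing. The paper argues dually, showing that $T(z^{k-1}\otimes\mathbf{e})$ pairs to zero against every Hecke eigenform, while you decompose the winding element itself into isotypic components $w_f$ and characterize which are nonzero; this is the same argument in primal form, and your explicit observation that the pairings against $f$ and $\overline{f}$ vanish together (so that vanishing against the holomorphic eigenforms alone suffices to detect the zero class) makes precise a point the paper leaves implicit.
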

    \begin{proof}
Let us consider an element $T$ from the left-hand side. Then
 for every $f$ belonging to the set $\mathcal{B}_{2k}(p)$ such that $L^{ar}(f, k)\neq0$, we have the following:
   \begin{align}
   \label{Heckee}
       \langle T(z^{k-1} \otimes \mathbf{e}), f \rangle= \langle z^{k-1} \otimes \mathbf{e}, Tf \rangle=0.
   \end{align}
  If $f \in \mathcal{B}_{2k}(p)$ be such that $L^{ar}(f, k)=0$ then from \eqref{lfun} we get 
    \begin{align*}
        \langle z^{k-1} \otimes \mathbf{e}, f \rangle=0.
    \end{align*}
    Therefore $\langle T(z^{k-1} \otimes \mathbf{e}), f \rangle=0$ because $f\in \mathcal{B}_{2k}(p)$. Hence, if $T$ be an element in left hand side  of \eqref{identity} then
    $Tf=0$ (since it is in the annihilator of $f$)  and $L^{ar}(f,k)\neq 0$ as $f \in \mathcal{E}_{k-1}$. 
    For all $f \in \mathcal{B}_{2k}(p)$, we have by \eqref{Heckee}, i.e., $ \langle T(z^{k-1} \otimes \mathbf{e}), f \rangle=0$. We deduce that 
    $T(z^{k-1} \otimes \mathbf{e} )=0$. This implies that the Hecke operator $T\in \displaystyle\mathrm{Ann}(z^{k-1} \otimes \mathbf{e})$.
    
    Now, for the other inclusion, let us consider an element $T\in \displaystyle\mathrm{Ann}(z^{k-1} \otimes \mathbf{e})$ and
    hence $ T( z^{k-1} \otimes \mathbf{e})=0$. 
    Let $f\in \mathcal{E}_{k-1}$ and hence $f \in  \mathcal{B}_{2k}(p)$ (Hecke basis) such that $L^{ar}(f, k)\neq0$. If possible $T f \neq 0$.  Since it is a Hecke basis so $Tf=\lambda_f(T) f$.  Since $T(f) \neq 0$, we have  $\lambda_f(T_r) \neq 0$. We then have
     \begin{align*}
       \langle z^{k-1} \otimes \mathbf{e}, T f \rangle= \langle T(z^{k-1} \otimes \mathbf{e}), f \rangle=0.
   \end{align*}
   Now by \eqref{lfun} implies $\lambda_f(T)\langle z^{k-1} \otimes \mathbf{e}, f \rangle=\lambda_f(T) L^{ar}(f, k) =0$. This is a clear contradiction. 
   Therefore $T$ is an element in the left hand side. This completes the proof.
   \end{proof}
Using Lemma \ref{identity} we prove the following lemma which the number field analogue of \cite[Lemma 7.12]{MR3530536}, this is true even for the function field setting.
\begin{lem}\label{keylemma}
    The $\Z$-module $\mathbb{T}(z^{k-1} \otimes\mathbf{e}) \otimes \mathbb{Q}$ is a $\mathbb{Q}$  vector space of  dimension $\#\lbrace f\in \mathcal{B}_{2k}(p)\mid L^{ar}(f,k)\neq 0\rbrace.$
\end{lem}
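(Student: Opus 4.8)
The plan is to prove Lemma \ref{keylemma} by combining the annihilator identity of Lemma \ref{identity} with the standard fact that the Hecke algebra acts semisimply on the space of cusp forms over $\mathbb{Q}$. First I would observe that $\mathbb{T}(z^{k-1}\otimes\mathbf{e})\otimes\mathbb{Q}$ is, as a $\mathbb{T}_{\mathbb{Q}}$-module, isomorphic to the quotient $\mathbb{T}_{\mathbb{Q}}/\big(I_e\otimes\mathbb{Q}\big)$, where $I_e=\mathrm{Ann}(z^{k-1}\otimes\mathbf{e})$; this is because the map $T\mapsto T(z^{k-1}\otimes\mathbf{e})$ has kernel exactly $I_e$ by definition, so the image is $\mathbb{T}/I_e$ as a $\mathbb{Z}$-module, and tensoring with $\mathbb{Q}$ is exact. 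Thus it suffices to compute $\dim_{\mathbb{Q}}\mathbb{T}_{\mathbb{Q}}/(I_e\otimes\mathbb{Q})$.

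Next I would invoke the structure theory of $\mathbb{T}_{\mathbb{Q}}$. Since the level $p$ is prime and we work in characteristic zero, the Hecke algebra $\mathbb{T}_{\mathbb{Q}}=\mathbb{T}\otimes\mathbb{Q}$ acting on $S_{2k}(\Gamma_0(p))$ is a finite-dimensional commutative semisimple $\mathbb{Q}$-algebra, and it decomposes as a product $\prod_{[f]}K_{[f]}$ of the Hecke fields, where $[f]$ ranges over the Galois orbits of eigenforms in $\mathcal{B}_{2k}(p)$, and each factor $K_{[f]}$ corresponds to the quotient $\mathbb{T}_{\mathbb{Q}}/\mathrm{Ann}_{[f]}\otimes\mathbb{Q}$. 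Under this decomposition, $\dim_{\mathbb{Q}}K_{[f]}$ equals the number of eigenforms in the orbit $[f]$, so that summing over all orbits recovers $\dim_{\mathbb{Q}}\mathbb{T}_{\mathbb{Q}}=\dim_{\mathbb{C}}S_{2k}(\Gamma_0(p))$.

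The key step is then to identify $I_e\otimes\mathbb{Q}$ inside this product decomposition. By Lemma \ref{identity} we have $I_e=\bigcap_{[f]\in\mathcal{E}_{k-1}}\mathrm{Ann}([f])$, and tensoring the intersection with $\mathbb{Q}$ (using that there are finitely many orbits and the algebra is semisimple, so intersection commutes with the flat base change) gives that $I_e\otimes\mathbb{Q}$ is precisely the product of the factors $K_{[f]}$ for which $[f]\notin\mathcal{E}_{k-1}$, i.e.\ those with $L^{ar}(f,k)=0$. Consequently the quotient $\mathbb{T}_{\mathbb{Q}}/(I_e\otimes\mathbb{Q})$ is isomorphic to $\prod_{[f]\in\mathcal{E}_{k-1}}K_{[f]}$, whose $\mathbb{Q}$-dimension is $\sum_{[f]\in\mathcal{E}_{k-1}}\dim_{\mathbb{Q}}K_{[f]}=\#\{f\in\mathcal{B}_{2k}(p)\mid L^{ar}(f,k)\neq0\}$, since each orbit contributes exactly its number of eigenform members, and membership in $\mathcal{E}_{k-1}$ is a Galois-orbit condition (all eigenforms in a fixed orbit share the same vanishing behavior of the central value, up to Galois conjugation).

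I expect the main obstacle to be the careful verification that $\mathrm{Ann}([f])\otimes\mathbb{Q}$ really does correspond to the complementary product of Hecke-field factors, and in particular that the intersection defining $I_e$ behaves correctly after $\otimes\mathbb{Q}$; this rests on the semisimplicity of $\mathbb{T}_{\mathbb{Q}}$ and the fact that distinct Galois orbits give coprime (comaximal) ideals, so the Chinese Remainder Theorem applies and the quotient splits as claimed. A secondary point requiring care is that the central-value nonvanishing condition $L^{ar}(f,k)\neq0$ is constant along each Galois orbit $[f]$, which is what allows the count $\sum_{[f]\in\mathcal{E}_{k-1}}\dim_{\mathbb{Q}}K_{[f]}$ to be rewritten as the cardinality of the set of individual eigenforms with nonvanishing central value.
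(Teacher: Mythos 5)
Your proposal is correct and follows essentially the same route as the paper: both identify $\mathbb{T}(z^{k-1}\otimes\mathbf{e})\otimes\mathbb{Q}$ with $\mathbb{T}_{\mathbb{Q}}/(I_e\otimes\mathbb{Q})$, invoke Lemma \ref{identity} to realize $I_e$ as $\bigcap_{[f]\in\mathcal{E}_{k-1}}\mathrm{Ann}_{[f]}$, and conclude that the quotient is $\prod_{[f]\in\mathcal{E}_{k-1}}K_{[f]}$, whose $\mathbb{Q}$-dimension counts the eigenforms with $L^{ar}(f,k)\neq 0$. You in fact spell out two points the paper leaves implicit --- the comaximality/semisimplicity needed for the product decomposition, and the Galois-invariance (via algebraicity of normalized central values) of the nonvanishing condition --- but the underlying argument is the same.
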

\begin{proof}
We have the following isomorphism 
    $$\frac{\mathbb{T}}{ \mathrm{Ann}(z^{k-1} \otimes \mathbf{e})}\cong\mathbb{T}(z^{k-1} \otimes \mathbf{e}). $$
 We calculate the rank of the quotient $\mathbb{T}\slash \mathrm{Ann}(z^{k-1} \otimes \mathbf{e})$. 
Now using Lemma \ref{identity} we can write that $\displaystyle\mathrm{Ann}(z^{k-1} \otimes \mathbf{e})$ is the kernel of the  $\Z$-modules homomorphism 
   \begin{align*}
       \varphi: \mathbb{T}\to \prod_{[f]\in \mathcal{E}_{k-1}} \frac{\mathbb{T}}{\mathrm{Ann}_{[f]}} \simeq \frac{\mathbb{T}}{\bigcap_{[f]\in \mathcal{E}_{k-1}}\mathrm{Ann}_{[f]}} \simeq \frac{\mathbb{T}}{\mathrm{Ann}\left(z^{k-1} \otimes \mathbf{e}\right)}.
   \end{align*}
   So, the $\mathbb{Q}$-vector space $\frac{\mathbb{T}}{\displaystyle\mathrm{Ann}(z^{k-1} \otimes \mathbf{e})} \otimes_{\Z}\mathbb{Q}$ is isomorphic to $\displaystyle\prod_{[f]\in \mathcal{E}_{k-1}} K_{[f]}$ and the dimension of the same is 
   $$\# \lbrace f\in \mathcal{B}_{2k}(p)\mid L^{ar}(f,k)\neq 0\rbrace.$$
   This completes the proof.

   \end{proof}
   
   %%%%%%%%%%%%%%%%%%%%%%%%%%%%%%%%%%%%%%%%%%%%%%%%%%%%%%%%%%%%%%%%%%%%%%%%%%%%%%%%%%%%%%%%%%%%%%%%%%%%%%%%%%%%%%%%%%%%%%%%%%%%%%%%%%%%%%%%%%%%%%%%%%%%
%   \begin{cor}\label{corollary}
 %  Let us consider the ideal $I_e:=\mathrm{Ann}\left(z^{k-1} \otimes \mathbf{e}\right)$. Suppose %$f \in S_{2k}(\Ga_0(p))$ be a new form.  Then, $I_e f =0$ if and only if $L^{ar}(f,k) \neq 0$. 
  %\end{cor} 
  %\begin{proof}
   %   The proof directly follows from Lemma \ref{identity}.
  %\end{proof}
%Using this explicit bound we have our next proposition.

           \begin{rem}
      If we restrict ourselves to the subset $\mathcal{B}^+_{2k}(p)$ consisting of modular forms with root numbers $\epsilon(f)=1$. Iwaniec--Sarnak \cite[p. 158]{MR1815374} shows that 
for almost half of them, corresponding arithmetic $L$-functions are nonvanishing at the {\it central} critical point $k$. This was the conjecture of Katz--Sarnak \cite{MR1640151}. In our present paper, we restrict our attention to 
$\mathcal{B}_{2k}(p)$ rather than on smaller vector space $\mathcal{B}^+_{2k}(p)$.
   \end{rem}

  \begin{proof}[Proof of \ref{mainthm} $(a)$]
      By Theorem~\ref{mainthm1}, we have $T_1 z^{k-1} \otimes \mathbf{e},\ldots, T_D z^{k-1} \otimes \mathbf{e}$ are linearly independent over $\Z$ for $D<c_{\delta}p^{\frac{1}{2}-\delta}$.
      We then have the rank of $\frac{\T}{I_e}$ is $D$. We deduce there are $D$ many elements in Hecke basis with non-vanishing central critical values by Lemma~\ref{keylemma}. 
      \end{proof}

\subsection{Algebraic modular form}
Let $N \in \N$ be a natural number. By $M_{2k}(\Ga_0(N))$, we denote the $\C$-vector space of classical modular forms of weight $2k$ and level $N$ as in \cite{MR2112196}. Inside this $\C$-vector space, let  $S_{2k}(\Ga_0(N))$ be the space of cusps forms.
 
Let $M_{2k}(\Ga_0(N);\Z)$ denote the lattice consisting of elements $ M_{2k}(\Ga_0(N))$ whose Fourier coefficients at the cusp $\infty$  belong to  $\Z$. By \cite[Corollary 12.3.12 and Proposition 12.4.1]{MR1357209}, we know that $M_{2k}(\Ga_0(N); \Z)$ is stable under the action of Hecke operators and contains a basis of $M_{2k}(\Ga_0(N))$.
 For any $\Z$-algebra  $S$, we define $M_{2k}(\Ga_0(N); S) := M_{2k}(\Ga_0(N); \Z) \otimes S$. By $S_{2k}(\Ga_0(N);S)$ we denote the corresponding space of cusp forms (the forms for which constant terms are zero).

Let $\T:= \T(N) \subseteq \mathrm{End}_{\Z}(S_{2k}(\Ga_0(N); \Z)$
be the Hecke algebra generated by the Hecke operators $T_n$ for all $n \in \N$. We recall the following fundamental proposition due to Hida (cf. Diamond--Im~\cite[Proposition 12.4.13, p. 118]{MR1357209}): 
\begin{prop}
\label{Hida}
For any $\Z$-algebra $S$, we define $\T_S:= \T\otimes S$. The pairing: $S_{2k}(\Gamma_0(N); S) \times \T_S \rightarrow  S$
given by $(f, T_l) \mapsto a_1(T_l f)$  is  a perfect pairing of $S$-modules.  In particular, we deduce  that $S_{2k}(\Gamma_0(N); S) \simeq \Hom_S(\T_S, S)$. 
\end{prop}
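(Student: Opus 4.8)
The plan is to reduce the statement to a perfect pairing of finite free $\Z$-modules and then base-change to $S$. The key algebraic input I would isolate first is the Fourier-coefficient identity $a_1(T_n f) = a_n(f)$, valid for every $n \geq 1$ and every $f \in S_{2k}(\Ga_0(N))$ (including the operators attached to primes dividing $N$, where it reads $a_1(U_\ell f) = a_\ell(f)$). With this in hand the pairing $(f, T) \mapsto a_1(Tf)$ literally reads off the Fourier coefficients of $f$ through the Hecke operators, so all its properties become statements about $q$-expansions.

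Next I would establish perfectness over $\C$, using three facts: the identity above, the commutativity of $\T$, and the faithfulness of $\T$ acting on $S_{2k}(\Ga_0(N); \C)$ (which holds by construction, since $\T \subseteq \mathrm{End}_{\Z}(S_{2k}(\Ga_0(N); \Z))$). Left non-degeneracy is immediate: if $f \neq 0$ then some $a_n(f) = a_1(T_n f) \neq 0$. For right non-degeneracy, suppose $a_1(T g) = 0$ for all $g$; applying this to $g = T_n f$ and using commutativity gives $a_n(Tf) = a_1(T_n T f) = a_1(T T_n f) = 0$ for all $n$, so $Tf = 0$ for all $f$, whence $T = 0$ by faithfulness. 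A bilinear pairing of finite-dimensional spaces that is non-degenerate on both sides is perfect and forces $\dim_\C \T_\C = \dim_\C S_{2k}(\Ga_0(N); \C)$.

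Then I would refine this to $\Z$. Both $S_{2k}(\Ga_0(N); \Z)$ and $\T$ are torsion-free finitely generated $\Z$-modules, hence free, of the common rank just computed. The map $\phi \colon S_{2k}(\Ga_0(N); \Z) \to \Hom_\Z(\T, \Z)$, $\phi(f)(T) = a_1(Tf)$, is injective by the $q$-expansion principle (if $\phi(f) = 0$ then all $a_n(f)$ vanish). For surjectivity I would take $\lambda \in \Hom_\Z(\T, \Z)$, extend scalars to $\C$, invoke the $\C$-perfectness to produce $f \in S_{2k}(\Ga_0(N); \C)$ with $a_1(Tf) = \lambda(T)$, and observe that $a_n(f) = \lambda(T_n) \in \Z$, so in fact $f \in S_{2k}(\Ga_0(N); \Z)$ and $\phi(f) = \lambda$. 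Thus $\phi$ is an isomorphism and the integral pairing is perfect.

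Finally I would pass to an arbitrary $\Z$-algebra $S$. Since $\T$ is finite free, $\Hom_\Z(\T, \Z) \otimes_\Z S \cong \Hom_S(\T \otimes_\Z S, S) = \Hom_S(\T_S, S)$; tensoring the isomorphism $\phi$ with $S$ and using $S_{2k}(\Ga_0(N); S) = S_{2k}(\Ga_0(N); \Z) \otimes_\Z S$ yields $S_{2k}(\Ga_0(N); S) \cong \Hom_S(\T_S, S)$, which is precisely perfectness of the pairing over $S$. The main obstacle is the step over $\C$: one must genuinely use faithfulness of $\T$ together with the fact that the integral Hecke algebra and integral cusp forms are free $\Z$-lattices of the same rank; once these are in place, everything else is the elementary $q$-expansion identity and formal base change.
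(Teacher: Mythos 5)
Your proof is correct. Note that the paper itself gives no proof of this proposition: it is recalled verbatim from Hida via Diamond--Im \cite[Proposition 12.4.13]{MR1357209}, and your argument --- the identity $a_1(T_nf)=a_n(f)$, non-degeneracy over $\C$ via faithfulness and the $q$-expansion principle, surjectivity over $\Z$ by producing a complex form whose coefficients $\lambda(T_n)$ are forced to be integral, and formal base change using freeness of $\T$ --- is essentially the standard proof found in that reference, so there is nothing to reconcile.
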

 From loc. cit., we have $\T$ is a free $\Z$-module of rank $r$, where $r= \mathrm{dim}_\C S_{2k}(\Ga_0(N))$. 
\subsection{Proof of the main theorem}
 The following proposition is a generalization to weight $2k$ from weight $2$, which is a theorem due to Kamienny (cf. \cite[Theorem 5]{MR2498054}): 

 \begin{prop}
 \label{equiv}
 For all prime numbers $l\neq p$, the following conditions are equivalent:
 \begin{enumerate}
\item 
There exists weight $2k$ cusp forms $\bar{f}_1, \bar{f}_2,\ldots,\bar{f}_D \in S_{2k}(\Ga_0(N),\F_l)$ with $I_e \bar{f}_i=0$ such that $(a_1(\bar{f}_i), a_2(\bar{f}_i), \ldots, a_D(\bar{f}_i))_{i \in \{1,\ldots, D\}}$
are linearly independent.

\item 
The images of $T_1,\ldots, T_D$ in $\frac{\T}{(l, I_e)}$ are $\F_l$-linearly independent.
\item 
The quantities $T_1 z^{k-1} \otimes \mathbf{e},\ldots, T_D z^{k-1} \otimes \mathbf{e} $ are linearly independent over $\F_l$. 
 \end{enumerate}
 \end{prop}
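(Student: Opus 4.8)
The plan is to prove the equivalence of the three conditions by establishing the cycle of implications
$(3) \Leftrightarrow (2)$ and $(2) \Leftrightarrow (1)$, using Hida's perfect pairing (Proposition~\ref{Hida}) as the central bridge between the symbol-theoretic, Hecke-algebraic, and Fourier-coefficient formulations. The key organizing principle is that all three statements are really statements about the $\F_l$-vector space $\T/(l, I_e)$, where $I_e = \mathrm{Ann}(z^{k-1}\otimes\mathbf{e})$; each condition expresses linear independence of the images of $T_1,\ldots,T_D$ in a space naturally identified with this quotient.

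First I would prove $(3)\Leftrightarrow(2)$. By Lemma~\ref{keylemma} and the isomorphism $\T/I_e \cong \T(z^{k-1}\otimes\mathbf{e})$, the map $T\mapsto T(z^{k-1}\otimes\mathbf{e})$ identifies $\T/I_e$ with the $\Z$-module generated by the orbit of the winding element. Reducing modulo $l$, the images $T_i(z^{k-1}\otimes\mathbf{e})$ in $\mathbb{S}_{2k}(\Gamma_0(p))\otimes\F_l$ correspond precisely to the images of $T_i$ in $\T/(l,I_e)$ under this identification, so their $\F_l$-linear independence is literally the same statement. This step is essentially formal once one is careful that reduction mod $l$ commutes with the identification, which holds because $\T$ is free over $\Z$ (from the discussion following Proposition~\ref{Hida}) and $I_e$ is a $\Z$-pure submodule after saturating.

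For $(2)\Leftrightarrow(1)$ I would invoke Hida's perfect pairing $S_{2k}(\Gamma_0(N);\F_l)\times \T_{\F_l}\to \F_l$ sending $(f,T)\mapsto a_1(Tf)$. Applying this in the quotient, the condition $I_e\bar{f}_i=0$ says exactly that each $\bar{f}_i$ lies in the annihilator of $I_e$, which under the perfect pairing corresponds to $\Hom_{\F_l}(\T/(l,I_e),\F_l)$. The entries $a_j(\bar{f}_i)=a_1(T_j\bar{f}_i)$ are then the values of the pairing against the images of $T_j$, so the matrix $(a_j(\bar{f}_i))$ is a Gram-type matrix for the perfect pairing between the span of the $\bar{f}_i$ and the span of the images of $T_1,\ldots,T_D$. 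Nondegeneracy of the pairing converts linear independence of the rows (condition~(1)) into linear independence of the $T_j$-images (condition~(2)), and conversely one produces the requisite forms $\bar{f}_i$ as a dual basis.

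The main obstacle I anticipate is the careful handling of the passage between characteristic zero and characteristic $l$, specifically ensuring that $\mathrm{Ann}(z^{k-1}\otimes\mathbf{e})$ behaves well under reduction so that $\T/(l,I_e)$ is the correct object appearing on both sides. One must verify that the winding element lies in the integral (rather than merely real) homology up to the relevant pairing, and that the perfect pairing of Proposition~\ref{Hida} genuinely restricts to a perfect pairing on the quotient by $(l,I_e)$ and its annihilator; this is the content of the generalization of Kamienny's argument and is where the weight-$2k$ integral structure of $M_{2k}(\Gamma_0(N);\Z)$ from the cited results of Diamond--Im must be used to guarantee the requisite torsion-freeness and basis-compatibility. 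Once the integral structures are aligned, the equivalences follow from linear duality.
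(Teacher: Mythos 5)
Your proposal follows essentially the same route as the paper: $(2)\Leftrightarrow(3)$ via the isomorphism $\T/I_e\simeq\T(z^{k-1}\otimes\mathbf{e})$ tensored with $\F_l$, and $(1)\Leftrightarrow(2)$ via the perfect pairing induced from Proposition~\ref{Hida} on $S_{2k}(\Ga_0(N),\F_l)[I_e]\times\T/(l,I_e)$. Your observation that the coefficient matrix $(a_j(\bar f_i))=(a_1(T_j\bar f_i))$ is the Gram matrix of this pairing is a slightly cleaner way to phrase the duality step than the paper's own write-up, but it is the same argument.
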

 \begin{proof}
 We first prove $(1)$ and $(2)$ are equivalent. 
 There is an induced {\it perfect} pairing between $\F_l$ vector spaces:
\[
S_{2k}(\Ga_0(N),\F_l)[I_e] \times \frac{\T}{(l, I_e)} \rightarrow \F_l.
\]
We deduce that  $\mathrm{dim}_{\F_l} S_{2k}(\Ga_0(N),\F_l)[I_e]=D$ if $\mathrm{dim}_{\F_l} \frac{\T}{(l,I_e)}=D$. 
From the elementary linear algebra, $\mathrm{dim}_{\F_l} S_{2k}(\Ga_0(N),\F_l)[I_e]=D$ implies there exist $\F_l$ linearly independent cusp forms $\lbrace \bar{f}_i \rbrace_{i=1, \ldots, D}$ in the sense of Serre  such that $T \bar{f}_i =0$ for all $T \in I_e$. We deduce that  $\bar{f}_i \in S_{2k}(\Ga_0(N),\F_l)$ are $\F_l$ linearly independent and $I_e \bar{f}_i=0$ for all $i \in \{1, 2, \ldots, D\}$. If we have a 
relation 
\[
c_1 \bar{f}_1+c_2\bar{f}_2+\cdots+c_D \bar{f}_D=0;
\]
this implies all the $c_i$'s are zero. For each $\bar{f}_i$ we have Fourier expansion of the form $f_i=\sum\limits_{n\geq 1} a_n(\bar{f}_i) q^n$ with $i=1, \ldots, D$. We have a system of equation of the form 
$A\underline{c}=0$ with $\underline{c}=(c_1, c_2, \ldots, c_D)$ where 
$$
A=\begin{pmatrix}
a_1(\bar{f}_1) & a_2(\bar{f}_2) &\cdots &  a_1(\bar{f}_D)\\
\vdots  & \vdots  & \ddots & \vdots\\
a_D(\bar{f}_1) & a_D(\bar{f}_2) & \cdots & a_D(\bar{f}_D)
\end{pmatrix}.
$$ Now, $\underline{c}=0$ implies vectors
$(a_1(\bar{f}_i), a_2(\bar{f}_i), \ldots, a_D(\bar{f}_i))_{i=1, \ldots, D}$ are linearly independent.

To verify the condition $(2)$ and $(3)$ are equivalent we observe that $\frac{\T}{I_e} \simeq \T (z^{k-1} \otimes \mathbf{e})$. Hence, we have 
\[
\frac{\T}{I_e} \otimes \F_l\simeq \T (z^{k-1} \otimes \mathbf{e}) \otimes \F_l.
\]
This completes the proof.
 \end{proof}
  \begin{proof}[Proof of \ref{mainthm} $(b)$]
      
     By Theorem~\ref{mainthm1}, we have $T_1 z^{k-1} \otimes \mathbf{e},\ldots, T_D z^{k-1} \otimes \mathbf{e}$ are linearly independent over $\Z$ for $D<c_{\delta}p^{\frac{1}{2}-\delta}$.
      We then have the rank of $\frac{\T}{I_e}$ is $D$. We deduce there are $D$ many elements in Hecke basis with non-vanishing central critical values by Lemma~\ref{keylemma}.  The dimension of the $\F_l$ vector space $\frac{\T}{I_e} \otimes \F_l$ is also $D$  for all prime numbers $l \neq p$ by tensoring over the field $\F_l$. Hence, for $D<c_{\delta}p^{\frac{1}{2}-\delta}$ we get $T_1 z^{k-1} \otimes \mathbf{e},\ldots, T_D z^{k-1} \otimes \mathbf{e} $ are linearly indepedent over $\F_l $ for all prime numbers $l \neq p$. 
   We now deduce our main theorem from Proposition~\ref{equiv}.
     \end{proof}
     
\section{Numerical computation using {\tt MAGMA} and the constant $c_{\delta}$}
If we use {\tt MAGMA} for $N=23$ and even weight two. Observe that $4<\sqrt{23}$. However, we provide the following {\tt MAGMA} computations:
\begin{align*}
&\mathtt{M:= ModularSymbols(23);
T2 := HeckeOperator(M,2);
T3 := HeckeOperator(M,3);
}\\
&
\mathtt{
T4 := HeckeOperator(M,4);
T5 := HeckeOperator(M,5);
E :=WindingElement(M);}
\\
&
\mathtt
{E;
-1*\{oo, 0\}\quad
E*T2;
-1*\{-1/11, 0\} + -3*\{oo, 0\}}
\\
&
\mathtt
{ E*T3;                  
\{-1/19, 0\} + -1*\{-1/17, 0\} + -1*\{-1/11, 0\} + -4*\{oo, 0\}
}\\
&
\mathtt{E*T4;                    
\{-1/19, 0\} + -1*\{-1/17, 0\} + -2*\{-1/11, 0\} + -7*\{oo, 0\}}
\\
&
\mathtt{E*T5;-2*\{-1/11, 0\} + -6*\{oo, 0\}}.
\end{align*}

It is easy to see that  $c_1 T_1 \mathbf{e}+c_2 T_2 \mathbf{e}+c_3 T_3 \mathbf{e} +c_4 T_4 \mathbf{e}=0$ will produce a clear dependence. That just shows that the constant in the theorem is not $1$. We did the computation even for weight $4$ and higher weights. 
Generally, we are getting linear dependence. However, for  $d=4$ and for primes $p \ge 257$, we are getting linear independence (cf.~\cite{Debarghawebpage}).  
This gives a methodical way of checking linear dependence or independence of winding elements.

\bibliographystyle{crelle}
\bibliography{Eisensteinquestion.bib}

\end{document}